\documentclass[10pt,a4paper,reqno]{amsart}
\usepackage[utf8]{inputenc}
\usepackage{amsmath}
\usepackage{amsthm}
\usepackage{import}
\usepackage{graphicx,caption,subcaption}
\usepackage{amsfonts}
\usepackage{amssymb}
\usepackage{palatino}
\usepackage{color}
\usepackage{comment}
\usepackage[left=2.5cm,right=2.5cm,top=2cm,bottom=2cm]{geometry}


\newcommand{\R}{\mathbb{R}}

\newcommand{\N}{\mathbb{N}}
\newcommand{\Z}{\mathbb{Z}}
\newcommand{\cB}{\mathcal{B}}

\newcommand{\cD}{\mathcal{D}}

\newcommand{\cU}{\mathcal{U}}



\newcommand{\bes}{\begin{equation*}}
\newcommand{\ees}{\end{equation*}}
\newcommand{\beas}{\begin{eqnarray*}}
\newcommand{\eeas}{\end{eqnarray*}}
\newcommand{\bea}{\begin{eqnarray}}
\newcommand{\eea}{\end{eqnarray}}
\newcommand{\be}{\begin{equation}}
\newcommand{\ee}{\end{equation}}

\newcommand{\Om}{\Omega}


\newcommand{\bt}{\mathbf{t}}
\newcommand{\bT}{\mathbf{T}}

\newcommand{\Int}{[0,1]}

\newcommand{\DN}{\Delta_n}

\newcommand{\DINSR}{\Delta^{s,r}_{i,k}}
\newcommand{\CP}{P}
\newcommand{\CPBR}{P^{xy}_{su}}

\newcommand{\ECPSX}{E_{P^x_s}}
\newcommand{\RINV}{\Xi_{\ell}}
\newcommand{\tmin}{t^-}

\newcommand{\jbr}{\ell^{x,y}_{s,u}}
\newcommand{\CPBRU}{P^{xy}}
\newcommand{\ECPBR}{E_{\CPBR}}
\newcommand{\ECPBRU}{E_{\CPBRU}}
\newcommand{\BINP}{\cB_{n,p}}
\newcommand{\BINTL}{\cB_{y-x, \pi_{\lambda}(t)}}
\newcommand{\ve}{\varepsilon}

\newtheorem{prop}{Proposition}[section]
\newtheorem{theorem}{Theorem}[section]
\newtheorem{lemma}{Lemma}[section]

\newtheorem{remark}{Remark}[section]

\newtheorem{cor}{Corollary}[section]
\title{Bridges of Markov counting processes: quantitative estimates}
\author{Giovanni Conforti}
\address{Universit\"at Leipzig, Fakult\"at f\"ur Mathematik und Informatik, Augustusplatz 10, 04109 Leipzig, Germany}
\email{giovanniconfort@gmail.com}
\keywords{bridges, counting processes, duality formula, tail estimates}
\date{November 23,2015}
\subjclass[2010]{60J27,60J75}

\begin{document}
\maketitle
\tableofcontents

\begin{abstract}
In this paper we investigate the behavior of the bridges of a Markov counting process in several directions. 
We first characterize convexity(concavity) in time of the mean value in terms of lower (upper) bounds on the so called \textit{reciprocal characteristics}. This result gives a natural criterion to determine whether bridges are "lazy" or "hurried".
Under the hypothesis of global bounds on the reciprocal characteristics we prove sharp estimates for the marginal distributions and a comparison theorem for the jump times. When the height of the bridge tends to infinity we show the convergence to a deterministic curve, after a proper rescaling.
\end{abstract}

\section*{Introduction}
Counting processes are among the simplest stochastic processes but still feature a very broad set of applications in social sciences and engineering, see \cite{AND96},\cite{Bre13Markov},\cite{FLEM}. Moreover, some of the prototypes of jump processes, such as the Poisson processes, fall into this class, and the development of a stochastic calculus adjusted to these (and more general) pure jump processes led to important contributions, see \cite{Bre75},\cite{Jac75},\cite{Wat64}. In this paper we give a new criterion which allows to quantify  the behavior of the \textit{bridges} of a Markov counting process.
One of the most popular interpretations is that counting processes model the number of people arriving in a line.  Using this interpretation, our aim is to find robust  and easy-to-check conditions on the intensity of the counting process under which we can pull out quantitative information on the conditional distribution of the $i$-th arrival time, or on how many customers have arrived by time $t$, given that $n$ customers arrived in a day.\\
Despite the simplicity of the non pinned dynamics, bridges tend to be more complicated to study because their jump intensity is time-dependent and not explicit, but depends on the solution to a Kolmogorov-type equation, which is in general not available. Moreover, jump intensities and bridges are not in one-to one correspondence, in the sense that different counting processes give raise to the same family of bridges. A well known example of this is the fact that all Poisson processes, independently from their jump intensity, have identical bridges. Good estimates have to take into account only those features of the jump intensity which play an effective role in the construction of bridges. \\
Our  answers are  inspired by the theory of reciprocal processes, which provides the main computational tool: the so called \textit{reciprocal characteristics}. They should be thought at some invariants associated with bridges. Reciprocal characteristics have been first discovered for diffusions in \cite{Kre88} and are by now available for a much larger class of processes, including several classes of jump processes, see e.g.\cite{CL15}. However, to the best of our knowledge, it is for the first time in this note that such invariants are used to make quantitative estimates on the behavior of bridges.  \\
As a by product of our analysis we also obtain a natural way of defining when a counting process has \textit{lazy} bridges, meaning that "most customers tend to arrive at the end of the day".  \\
It is interesting to remark that, since counting processes are among the building blocks for general queuing models, our results might open the way for a detailed quantitative study of queuing model under partial information, such as the final state of the queue.\\
The paper is organized as follows: in Section 1 we recall some basics about Markov counting processes and recall some results which we are going to use later on. In Section 2 we present our main results,Theorem \ref{convbr}, Theorem \ref{jumptimesestimate}, and Theorem \ref{thm:lln}. In Section 3 we make their proofs.

\section{Markov Counting processes and their bridges}


The sample  path space $\Omega$ of the counting  processes consists of all c\`adl\`ag step functions with finitely many  jumps with amplitude $+1$ and an initial value in $\N$, from which we exclude paths which jumps either in $t=0$ or $t=1$. The space of probability measures over $\Omega$ is denoted $\mathcal{P}(\Omega)$.
Any path $ \omega\in\Om$ is described by the collection $(x;t_1,\dots,t_n)$ of its initial position $x\in\N$ and its $n= \omega_1- \omega_0$ instants of jumps $0<t_1<\cdots<t_n<1.$ We denote $T_i( \omega):=t_i$ the $i$-th instant of jump of $ \omega.$
The canonical process is denoted by $X=(X_t) _{ 0\le t\le 1}$, and we call counting process any probability law on $\Omega$
Any counting process $P$ admits an almost surely unique increasing process $A:[0,1]\times \Omega \rightarrow \R_+$ such that $P(A(0)= 0)=1$ and 
$	t \mapsto X_t - X_0 - A(t)
$  is a local $P$-martingale,
(see Jacod \cite[Thm.\ 2.1]{Jac75}, for instance). $A$  characterizes the dynamics of $P$, and it is called the compensator.
When the compensator is absolutely continuous, 
we call its derivative  the {\it intensity} of $P$. 
We consider \textit{Markov} counting processes, meaning that the intensity depends only on time and the current state. We assume that the  jump intensity $\ell: \Int \times \N \rightarrow \R_{+}$ satisfies the following:
\begin{enumerate}
\item[i)]  For all $z \in \N$, $t \mapsto \ell(t,z)$ is continuously differentiable in $[0,1]$.
\item[ii)] The function $ \Int \in \N \ni t,z \in \Int$, $z \mapsto \ell(t,z+1)-\ell(t,z)$ is globally upper and lower bounded.
\item[iii)]  For all $z \in \N$, $t \in \Int$,  $\ell(t,z)>0$.
\end{enumerate}
From now on, a reference counting process $P$ with intensity satisfying i),ii),iii) and whose initial measure has full support is fixed. 
In the rest of the paper whenever we refer to the intensity of a Markov counting process, we always assume that it satisfies i),ii), and iii) even if we do not specify it. i),ii) ensure existence and non-explosion of the process, (for the non explosion part, see \cite[Theorem 2.3.2]{NORRIS}), while iii) ensures that bridges are well defined between any pair of states $x,y$ with $x \leq y$.
 We denote by $\CPBR$ the $xy$ bridge between $s$ and $u$.
\bes
\CPBR(\cdot):= \CP( \cdot \, \vert X_s = x, X_{u}= y)
\ees
When $s=0,u=1$ we simply write $\CPBRU$. We will call $u-s$ the length of the bridge whereas $y-x$ is the height of the bridge. Moreover, for any $s \in [0,1]$, $x\in \N$ we define 
\bes
P^x_s(\cdot) = P(\cdot \, \vert X_s =x)
\ees 
Expectation under $P$ is denoted $E_{P}$. $\ECPBR$ and $\ECPBRU$,$E_{P^x_s}$ are defined analogously.
\subsection{Duality formula for the bridge of a Markov counting process}\label{subs:duality}
The reciprocal characteristic is a map $\RINV: \N \times \Int \rightarrow \R $ derived from the intensity of $P$, which encodes all the necessary and sufficient information on $\ell$ to construct the family of bridges $\{ \CPBR \}_{x \leq y \in \N, s\leq u}$. It is defined as\footnote{In the definition of the reciprocal characteristic we allowed for $\ell(t,z)=0$, which was excluded by our hypothesis iii). This is because in the proof of Theorem \ref{convbr} we will consider the reciprocal characteristics of the intensities of the bridges of $P$, which fail to satisfy iii). However, the reference intensity $\ell$ always satisfies this condition}:
\bes
\RINV(t,z):=
\begin{cases}
\partial_t \log \ell(t,z) + \ell(t,z+1)-\ell(t,z), \quad & \text{if $\ell(t,z) \neq 0$} \\
0, \quad & \text{if $\ell(t,z) = 0$}
\end{cases}
\ees
A short-time probabilistic interpretation of $\RINV$ can be found in a larger generality in \cite[Theorem 2.6]{CL15}: it describes the asymptotic distribution of jump times in a bridge whose time length is very short. It is part of this paper to extract quantitative bounds in a \textit{non} asymptotic time scale. 
At this point, it is worth recalling a duality formula proved in \cite{CLMR}, building on earlier work in  \cite{carlen88}, characterizing the bridges of $P$, as we are going to use it in the proofs.
 To streamline the presentation, we omit some minor details. For precise statements we refer to 
\cite[Theorem 2.12]{CLMR}. The duality formula is an integration by parts formula on the  path space $\Omega$ which puts in relation a derivative operator with a stochastic integral operator.
The derivative operator acts on test functions $\Phi:\Om\to\R$ of the form:
\bes
\Phi =\varphi\big(X_0;T_1,\dots, T_m\big) , \quad m \geq 1, \varphi:\N \times [0,1]^m \to\R 
\ees
where $\varphi$ is such that for all $x\in\Z$, the partial functions $\varphi(x;\cdot)$ are  ${\mathcal
C} ^{ \infty}$-differentiable.
The directions of differentiation are given by the set of periodic functions \bes \cU = \left\{ u \in \mathcal{C}^1(\Int;\R): u(0)=u(1)=0 \right\}\ees 
The derivative operator acts as follows on a test function $\Phi$:
\be\label{definizionederivata}
\cD_u\Phi	= - \sum_{j=1}^m \partial_{t_j} \varphi(X_0;T_1,\dots, T_m) u(T_j) 
\ee
After recalling that the stochastic integral  $\int_{0}^{1} f(t,X_{\tmin}) d X_{t}$ is, as usual, $\sum_{T_i <1} f(T_i,X_{T^-_i})$
we can state the formula.
\begin{theorem}\label{thm:derivcharacrec}
Let $P$ be a counting process of intensity $\ell$. Then $P^{xy}$ is the only element of $\mathcal{P}(\Om)$ such that $P^{xy}(X_0=x,X_1=y)=1$ and
\begin{equation} \label{dualityformula}
E_{P^{xy}} (\mathcal{D}_{u} \Phi)
=
E_{P^{xy} }\Big(\Phi\int _{0}^1 \big[\dot{u}(t)+ \Xi_{\ell}(t,X_{t^-})u(t)\big]\,dX_t \Big)
\end{equation}
holds for any test function $\Phi$ and any 
$u \in \mathcal{U}$.
\end{theorem}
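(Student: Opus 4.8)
The plan is to prove Theorem~\ref{thm:derivcharacrec} in two halves: first that every bridge $P^{xy}$ satisfies the duality formula \eqref{dualityformula}, and second that the formula, together with the pinning constraint $P^{xy}(X_0=x,X_1=y)=1$, characterizes $P^{xy}$ uniquely in $\mathcal{P}(\Om)$. Since the excerpt explicitly says this is taken from \cite[Theorem 2.12]{CLMR}, much of the work is a matter of assembling ingredients; I would present the argument as follows.

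For the \emph{validity} direction, I would start from the Markov property and a Girsanov-type description of the bridge. Fix a test function $\Phi=\vfi(X_0;T_1,\dots,T_m)$ and $u\in\cU$. The key computational identity is an integration-by-parts on path space: perturbing a path $\om=(x;t_1,\dots,t_n)$ by flowing the jump times along the vector field $u$ (i.e.\ $t_i\mapsto t_i^\theta$ where $\tfrac{d}{d\theta}t_i^\theta=u(t_i^\theta)$) gives a one-parameter family of measurable transformations $\Theta_\theta:\Om\to\Om$ which preserve $X_0$ and, because $u(0)=u(1)=0$, preserve the constraint $\{X_0=x,X_1=y\}$. Differentiating $E_{P^{xy}}(\Phi\circ\Theta_\theta)$ at $\theta=0$ produces $E_{P^{xy}}(\cD_u\Phi)$ on one side (by \eqref{definizionederivata}); on the other side one differentiates the Radon--Nikodym density of $(\Theta_\theta)_\#P^{xy}$ with respect to $P^{xy}$. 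The density factorizes through the jump intensity, and the logarithmic derivative of $\ell(t,z)$ along the flow is exactly what makes $\partial_t\log\ell$ appear, while the change in the ``survival'' (compensator) term contributes $\ell(t,z+1)-\ell(t,z)$; together these are $\Xi_\ell(t,X_{t^-})$ evaluated along the jumps, i.e.\ the stochastic integral $\int_0^1[\dot u(t)+\Xi_\ell(t,X_{t^-})u(t)]\,dX_t$. A clean way to organize this is to first prove the formula for the non-pinned process $P^x_s$ (or for $P$ itself) using the explicit intensity $\ell$, and then pass to the bridge: conditioning on $X_1=y$ multiplies the density by $h(t,X_t)/h(0,x)$ for the appropriate space-time harmonic function $h$, and one checks that the extra terms coming from $h$ cancel because $h$ solves the Kolmogorov backward equation — this cancellation is precisely why $\Xi_\ell$, and not the raw intensity of the bridge, is what shows up. I expect this cancellation/consistency check to be the main obstacle, since it requires knowing that the reciprocal characteristic is invariant under the $h$-transform defining the bridge; this is the conceptual content behind the footnote about allowing $\ell(t,z)=0$.

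For the \emph{uniqueness} direction, suppose $Q\in\mathcal{P}(\Om)$ satisfies $Q(X_0=x,X_1=y)=1$ and \eqref{dualityformula}. The strategy is to show that \eqref{dualityformula} pins down all finite-dimensional distributions of $Q$. One route: take $\Phi$ depending only on $X_0$ and $T_1$ (i.e.\ $m=1$), and a well-chosen family of $u\in\cU$, to derive an equation for the law of $T_1$ under $Q$ (conditionally on the trajectory being of a given ``type''); an induction on $m$ then determines the joint law of $(T_1,\dots,T_m)$, and since paths in $\Om$ have finitely many jumps this determines $Q$ on a measure-determining class. Alternatively, and more robustly, one shows that \eqref{dualityformula} forces $Q$ to be a Markov counting process with a prescribed (time-inhomogeneous, state-dependent) intensity — namely the bridge intensity attached to $\Xi_\ell$ and the pinning data — by testing against $\Phi$ that localize on the event $\{T_i\in ds, X \text{ has a given history}\}$; martingale-problem well-posedness for that intensity (available from hypotheses i)--iii) and the non-explosion result cited from \cite{NORRIS}) then gives $Q=P^{xy}$. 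Either way the analytic core is an approximation/density argument: the test functions $\Phi$ and directions $u$ are rich enough to separate points of $\mathcal{P}(\Om)$, which again I regard as the delicate step and where I would lean most heavily on \cite[Theorem 2.12]{CLMR} rather than redo it from scratch.

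In summary, I would (1) establish the path-space integration-by-parts identity for $P$ via the flow $\Theta_\theta$ and the explicit form of the density, (2) transfer it to $P^{xy}$ through the $h$-transform, checking the cancellation that isolates $\Xi_\ell$, and (3) run the uniqueness argument by showing \eqref{dualityformula} recovers the bridge's intensity and invoking well-posedness of the associated martingale problem. The technically heaviest point is the interplay in step (2) between differentiation along $u$ and the backward Kolmogorov equation solved by the conditioning density; everything else is bookkeeping with counting-process stochastic calculus.
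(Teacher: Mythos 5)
First, a point of reference: the paper does not prove Theorem \ref{thm:derivcharacrec} at all --- it is imported from \cite[Theorem 2.12]{CLMR}, and the surrounding text explicitly defers to that reference for the precise statement and proof. So there is no internal argument to compare yours against, and the review below assesses your outline on its own terms.

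Your plan follows what is indeed the standard route to such results (a Carlen--Pardoux-type perturbation of the jump times along a flow generated by $u$, a Girsanov computation for the density of the perturbed law, passage to the bridge by $h$-transform, and a separate uniqueness argument), and the two conceptual remarks you single out --- that $u(0)=u(1)=0$ makes the flow preserve the pinning, and that $\RINV$ rather than the bridge's own intensity appears because the reciprocal characteristic is invariant under space-time harmonic transforms --- are exactly the content of the cited theorem (the latter is restated in this paper as $\Xi_{\jbr}=\RINV$ in the proof of Theorem \ref{convbr}). However, as a proof it has two substantive gaps. First, the central computation, $\frac{d}{d\theta}\big\vert_{\theta=0}$ of the Radon--Nikodym density of $(\Theta_\theta)_{\#}P$ with respect to $P$, is asserted rather than carried out; this is precisely where $\partial_t\log\ell$ and the compensator difference $\ell(t,z+1)-\ell(t,z)$ must actually be produced and where hypotheses i)--ii) enter, and likewise the ``cancellation'' of the $h$-terms via the backward Kolmogorov equation is named but never verified. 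Second, the uniqueness half is left as two alternative sketches, neither executed; note that uniqueness is the part of the theorem this paper actually uses (in Lemma \ref{bridgedist}, to conclude $Q=P^{xy}$ from the fact that $Q$ satisfies \eqref{dualityformula}), so it cannot be waved at. A concrete way to close it, dual to Lemma \ref{bridgedist}, is to show that \eqref{dualityformula} together with the pinning forces the law of the jump times $(T_1,\dots,T_n)$ under any admissible $Q$ to have density proportional to $\exp(\xi(\bt))$ with respect to Lebesgue measure on $\Delta_n$, which determines $Q$ since a path in $\Om$ is determined by its initial point and jump times. As written, your text is a correct map of the proof in \cite{CLMR}, not a self-contained proof.
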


\section{Quantitative estimates }\label{sec:quantes}
\subsection{Convexity and concavity in time of the mean value}

Our first result is the equivalence between global bounds on $\RINV$ and the convexity/concavity of the mean value, as a function of time.
\begin{theorem}\label{convbr}
Let $P$ be a counting process. If
\bes
\inf_{t \in \Int, z \in \N} \RINV(t,z) \geq 0 ,
\ees
 then for all $0 \leq s \leq u \leq 1$ and for all $x \leq y \in \N$ the function $ [s,u] \ni t \mapsto \ECPBR(X_t)$ is convex.
Conversely, if
\bes
\sup_{t \in \Int, z \in \N} \RINV(t,z) \leq 0 ,
\ees
 then for all $0 \leq s \leq u \leq 1$, and for all $x \leq y \in \N$ the function $ [s,u] \ni t \mapsto \ECPBR(X_t)$ is concave.
\end{theorem}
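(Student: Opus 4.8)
The plan is to compute the second derivative in time of the function $t \mapsto \ECPBR(X_t)$ and show that its sign is controlled by the reciprocal characteristic, using the duality formula of Theorem \ref{thm:derivcharacrec}. The first observation is that $g(t) := \ECPBR(X_t)$ is smooth on $[s,u]$: since $P^{xy}_{su}$ is itself a Markov counting process (a bridge of a Markov process is Markov), it has a jump intensity, say $\ell^{x,y}_{s,u}(t,z)$, and the function $g$ solves an ODE of the form $\dot g(t) = \ECPBR\big(\ell^{x,y}_{s,u}(t, X_t)\big)$, so convexity of $g$ amounts to monotonicity of $t \mapsto \ECPBR(\ell^{x,y}_{s,u}(t,X_t))$. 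Rather than work with the (unavailable) bridge intensity directly, the idea is to exploit the duality formula with a cleverly chosen test function $\Phi$ and direction $u \in \cU$ to produce an identity that, after differentiation, isolates $\Xi_{\ell^{x,y}_{s,u}}$, and then to invoke the fact — recalled in the footnote and provable from the definition of $\Xi$ together with the Kolmogorov/Feynman–Kac representation of the bridge intensity — that the reciprocal characteristic is invariant under conditioning, i.e. $\Xi_{\ell^{x,y}_{s,u}} = \Xi_{\ell}$ on the relevant range of states. For this step it is cleanest to reduce to the case $[s,u] = [0,1]$ by rescaling time affinely, since $\cU$ and the class of test functions are stable under such reparametrization and the reciprocal characteristic transforms covariantly.

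The concrete mechanism I would use: fix $t_0 \in (0,1)$ and apply \eqref{dualityformula} with the one-jump test function $\Phi = \varphi(X_0; T_1)$ for suitable smooth $\varphi$, together with a direction $u$ that is a smooth bump concentrating near $t_0$. On the left-hand side one gets an expression involving $\partial_t \varphi$ evaluated at jump times; on the right-hand side, $\ECPBRU\big(\Phi \int_0^1 [\dot u(t) + \Xi_\ell(t, X_{t^-}) u(t)] dX_t\big)$. Taking $u$ to be an approximate derivative of an indicator and $\varphi \equiv 1$ (or rather a sequence peaking at $t_0$), the stochastic integral against $\dot u$ reproduces, in the limit, a derivative in $t_0$ of $\ECPBRU(X_{t_0})$, while the term with $\Xi_\ell u$ contributes $\ECPBRU(\Xi_\ell(t_0, X_{t_0}) \cdot \#\{\text{jumps near } t_0\})$, which in the limit is $\Xi_\ell(t_0, \cdot)$ integrated against the intensity of jumps at $t_0$, i.e. against $\dot g(t_0)$-type quantities. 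Organizing these limits carefully yields an identity of the schematic form $\ddot g(t_0) = \ECPBRU\big(\Xi_\ell(t_0, X_{t_0}) \, \dot\mu_{t_0}\big) + (\text{nonnegative variance-type term})$, where $\dot\mu_{t_0} \ge 0$ is the instantaneous jump rate; the sign of the first term is then dictated by the sign of $\inf \Xi_\ell$ (resp. $\sup \Xi_\ell$), and the bracketed term — a conditional covariance of $X_{t_0}$ with the jump count — has a definite sign that cooperates with convexity (resp. concavity).

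The main obstacle is precisely making the two limit passages rigorous and, simultaneously, controlling the "extra" term so that it does not spoil the sign: one must check that the second-order term that appears when differentiating $g$ twice — morally a conditional variance of the jump increments, which is $\geq 0$ — lands on the correct side for both the convex and the concave conclusions, and that no cross term of indeterminate sign survives. A cleaner route to handle this, which I would pursue in parallel, is to avoid explicit bump-function limits altogether: differentiate the bridge ODE $\dot g(t) = \ECPBR(\ell^{x,y}_{s,u}(t, X_t))$ once more to get $\ddot g(t) = \ECPBR\big(\partial_t \ell^{x,y}_{s,u}(t,X_t)\big) + \mathrm{Cov}_{\ECPBR}\big(\ell^{x,y}_{s,u}(t,X_t), \ell^{x,y}_{s,u}(t,X_t) \text{-increment}\big)$, recognize via the definition of $\Xi$ that $\partial_t \ell^{x,y}_{s,u} = \ell^{x,y}_{s,u}\big(\Xi_{\ell^{x,y}_{s,u}} - (\ell^{x,y}_{s,u}(\cdot, z+1) - \ell^{x,y}_{s,u}(\cdot,z))\big)$, and then combine the $-\ell^{x,y}_{s,u}(\ell^{x,y}_{s,u}(z+1) - \ell^{x,y}_{s,u}(z))$ term with the covariance term to show their sum is a genuine nonnegative (conditional) variance of the jump intensity along the path. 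Once that algebraic identity is in place, $\ddot g(t) = \ECPBR\big(\ell^{x,y}_{s,u}(t, X_t)\, \Xi_{\ell^{x,y}_{s,u}}(t, X_t)\big) + (\text{variance} \ge 0)$, and using $\Xi_{\ell^{x,y}_{s,u}} = \Xi_\ell \ge 0$ (resp. $\le 0$, in which case the variance term must be shown to flip or be absorbed — this asymmetry is the delicate point to verify) finishes the argument; the invariance $\Xi_{\ell^{x,y}_{s,u}} = \Xi_\ell$ itself is where the duality formula of Theorem \ref{thm:derivcharacrec} enters, since it characterizes $P^{xy}$ purely through $\Xi_\ell$.
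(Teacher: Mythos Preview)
Your overall plan --- compute $\ddot g(t)$ for $g(t)=\ECPBR(X_t)$ in terms of the bridge intensity, then invoke the invariance $\Xi_{\jbr}=\RINV$ on $\{x,\dots,y-1\}$ to read off the sign --- is exactly the paper's strategy. The paper proves a lemma for the \emph{unconditioned} process (if $Q$ is any Markov counting process with intensity $m$, then $t\mapsto E_{Q^x_s}(X_t)$ is convex iff $\Xi_m\ge 0$) by a direct generator computation, and then applies it to $Q=\CPBR$ together with the cited invariance result from \cite{CLMR}. So the architecture of your argument is correct.

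The genuine gap is the computation itself. In your ``cleaner route'' you write
\[
\ddot g(t)=\ECPBR\!\big(\partial_t \jbr(t,X_t)\big)+\mathrm{Cov}\big(\cdots\big),
\]
and then try to combine the covariance with the $-\jbr\,[\jbr(\cdot,z+1)-\jbr(\cdot,z)]$ piece into a nonnegative variance, worrying that this creates an asymmetry between the convex and concave statements. There is no covariance term. For any Markov counting process with intensity $m$, Dynkin's formula gives, for $\varphi_t:=E(X_t)$,
\[
\dot\varphi_t=E\big(m(t,X_{t^-})\big),\qquad
\ddot\varphi_t=E\big(\partial_t m(t,X_{t^-})+m(t,X_{t^-})[m(t,X_{t^-}+1)-m(t,X_{t^-})]\big),
\]
since the generator acts by $Lf(z)=m(t,z)[f(z+1)-f(z)]$. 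By the very definition of $\Xi_m$ this is exactly
\[
\ddot\varphi_t=E\big(m(t,X_{t^-})\,\Xi_m(t,X_{t^-})\big),
\]
with no remainder of either sign. Hence $\Xi_m\ge 0$ (resp.\ $\le 0$) gives $\ddot\varphi_t\ge 0$ (resp.\ $\le 0$) immediately and symmetrically; the ``delicate asymmetry'' you flag is an artifact of the erroneous covariance. Your first route via bump functions in the duality formula is unnecessary for this theorem (and similarly contaminated by the phantom variance term); the duality formula enters only indirectly, through the proof in \cite{CLMR} that $\Xi_{\jbr}=\RINV$.
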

Thanks to this result, it makes sense to say that the bridges of a counting process are "lazy" when $\RINV$ is a non negative function. Indeed, using the interpretation that $X_t$ counts the number of arrivals of customers, Theorem \ref{convbr} states that, on average, customers arrive at a slow rate at the beginning of the day, and this rate always increases over time in order to match the information that $y-x$ customers arrived in the whole day: hence arrivals are concentrated towards the end of the day. \\
 In particular, at time $t$, one expects to have observed less than $n \ t$ arrivals, ad the curve $ \Int \ni t \mapsto E_{P^{xy}}(X_t)$ lies below the straight line between $x$ and $y$. \\
    On the contrary,  if $\RINV$ is non-positive we can say that bridges are "hurried". Customers arrive very quickly at the beginning, and the graph of $\Int \ni t \mapsto E_{P^{xy}}(X_t)$ lies above the straight line between $x$ and $y$. The arrival of customers has to slow down as time grows (concavity), due to the constraint that $y-x$ customers, and not more, have to be arrived at the end of the day. We refer to Figure \ref{fig-1} for an illustration of these concepts. \\

\subsection{Estimates for the marginals}

Next result turns the qualitative statement of Theorem \ref{convbr} into  a quantitative one. We start from the observation that there are three main families of Markov counting processes such that $\RINV$ is a time-space constant function:
\begin{itemize}
\item The Poisson processes, i.e. $\ell(t,z) \equiv \alpha>0 $.  Regardless of the value of $\alpha$, their bridges are neither lazy nor hurried. The average rate which customers arrive is constantly equal to $y-x$.
\item Markov counting processes with time homogeneous and space linear rates, i.e. $\ell(t,z) = \lambda z + \alpha$ for some $\lambda, \alpha>0$.
\item The space-homogeneous counting processes with exponential rate, i.e. $\ell(t,z) = \alpha \exp(\lambda t)$, for some $\lambda \in \R, \alpha>0$. Their bridges may be lazy or hurried, depending on whether $\lambda$ is positive or not.
\end{itemize}

For this kind of models all sort of explicit computations are possible: we will use them as a benchmark in the next theorem. As we will see in Lemma \ref{explicitcomputation}, the marginal at time $t$ of the $xy$ bridge of a process with $\RINV \equiv \lambda $ is a binomial distribution whose parameters are determined by $\lambda,x,y$ and $t$. This is a generalization of the well known fact that marginals of Poisson bridges are binomial distributions.

We make use of this explicit computation to provide estimates for the case when the marginal distributions are not known in closed form. We shall see that if $\RINV$ admits a non negative lower bound, say $\lambda$, then any  bridge of $P$ has marginals which have lighter tails than the marginals of the same bridge built from a process satisfying $\RINV \equiv \lambda$. Therefore we prove that the tails are lighter than the tails of a certain binomial law. 
In this sense, Theorem \ref{jumptimesestimate} strengthens the definition of lazy bridge, giving a parameter to measure quantitatively such laziness, and creating a partial order in the family of processes with lazy bridges. All statements can be reversed when lower bounds become upper bounds.
 
To state the Theorem, some notation is needed:
we denote by $\BINP$ the binomial distribution associated with $n$ trials and probability of success $p$.
\bes
\forall \ 0 \leq k \leq n, \quad \quad \cB_{n,p}(k) = \binom{n}{k} p^k (1-p)^{n-k}
\ees
We define for all $\lambda \in \R$ the function $\pi_{\lambda}: \Int \rightarrow \Int$ as
\be\label{pilambda}
\pi_{\lambda}( t) = \frac{\exp(\lambda t )-1}{\exp(\lambda)-1}
\ee
\begin{theorem}\label{jumptimesestimate}
Let $P$ be a counting process of intensity $\ell$, $x \leq y \in \N$.
If  for some $\lambda \in \R$, 
\be\label{eq:densityest}
\inf_{t \in \Int, x \leq z \leq y-1 } \RINV(t,z) \geq \lambda 
\ee
then for  $1\leq i \leq y-x$ and $t \in [0,1]$:
\bes
\CPBRU(X_t  \geq x+i ) \leq \BINTL(\{i,.,y-x\})
\ees

Conversely, if for some $\lambda \in \R$,
\bes
\sup_{t \in \Int, x \leq z \leq y-1} \RINV(t,z) \leq \lambda 
\ees

then for any $1\leq i \leq y-x$ and  $t \in [0,1]$:
\bes
\CPBRU(X_t \geq x+i ) \geq \BINTL(\{i,.,y-x\})
\ees
\end{theorem}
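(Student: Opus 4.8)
The plan is to compare the bridge $\CPBRU$ directly with the bridge $Q^{xy}$ of a benchmark counting process $Q$ whose reciprocal characteristic is constantly $\lambda$ --- for concreteness $\ell_Q(t,z)=\exp(\lambda t)$, a time-inhomogeneous Poisson process, which trivially satisfies i)--iii). By Lemma \ref{explicitcomputation} the marginal of $Q^{xy}$ at time $t$ is $x+\BINTL$, so that $Q^{xy}(X_t\geq x+i)=\BINTL(\{i,\ldots,y-x\})$ is exactly the quantity we must dominate. It is enough to prove the first inequality; the second is obtained by reversing every inequality throughout. Throughout I use that both $\CPBRU$ and $Q^{xy}$ are time-inhomogeneous Markov counting processes on the finite state space $\{x,\ldots,y\}$ whose jump intensity $\ell^{xy}(t,z)$ is, for $x\leq z\leq y-1$, strictly positive and $\mathcal{C}^1$ on $[0,1)$, vanishes at $z=y$, and satisfies $\ell^{xy}(t,z)\to\infty$ (equivalently $1/\ell^{xy}(t,z)\to 0$) as $t\to1$; these are read off the Doob-transform representation $\ell^{xy}(t,z)=\ell(t,z)\,P(X_1=y\mid X_t=z+1)/P(X_1=y\mid X_t=z)$ together with the regularity of the Kolmogorov equations under i)--iii).

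The one structural input, beyond the explicit benchmark, is that the reciprocal characteristic is invariant under conditioning, i.e.\ $\Xi_{\ell^{xy}}(t,z)=\Xi_{\ell}(t,z)$ for $x\leq z\leq y-1$ and $t\in(0,1)$. This is the same fact that underlies the proof of Theorem \ref{convbr}; it is recorded for reciprocal characteristics in \cite{CL15}, and can also be extracted from the duality formula \eqref{dualityformula}: since $\CPBRU$ is its own $x$--$y$ bridge over $[0,1]$, Theorem \ref{thm:derivcharacrec} applied once to $P$ and once to $\CPBRU$ (viewed as a counting process with intensity $\ell^{xy}$) shows that $\CPBRU$ satisfies \eqref{dualityformula} both with $\Xi_\ell$ and with $\Xi_{\ell^{xy}}$, and localizing the test function $\Phi$ forces the two characteristics to agree on the states the bridge visits just before jumping, namely $\{x,\ldots,y-1\}$. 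Combined with \eqref{eq:densityest} this yields $\Xi_{\ell^{xy}}(t,z)\geq\lambda=\Xi_{\ell_Q^{xy}}(t,z)$ for $x\leq z\leq y-1$.

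The core of the argument is the pointwise comparison of bridge intensities
\[
\ell^{xy}(t,z)\ \leq\ \ell_Q^{xy}(t,z),\qquad x\leq z\leq y-1,\ \ t\in[0,1),
\]
which I would prove by downward induction on $z$, from $z=y-1$ to $z=x$. Unwinding the definition of the reciprocal characteristic, $g(t):=\ell^{xy}(t,z)$ solves the Bernoulli/Riccati-type equation $g'/g-g=\Xi_{\ell^{xy}}(t,z)-\ell^{xy}(t,z+1)$, where $\ell^{xy}(\cdot,y)\equiv0$ initializes the recursion; the substitution $h:=1/g$ linearizes this to $h'=-\beta(t)h-1$ with $\beta(t):=\Xi_{\ell^{xy}}(t,z)-\ell^{xy}(t,z+1)$ and terminal condition $h(1)=0$, whence $h(t)=\int_t^1\exp\!\big(\int_t^s\beta(r)\,dr\big)\,ds$, a quantity increasing in $\beta$. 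The same computation for $Q$ replaces $\beta$ by $\beta_Q(t)=\lambda-\ell_Q^{xy}(t,z+1)$, and the induction hypothesis $\ell^{xy}(t,z+1)\leq\ell_Q^{xy}(t,z+1)$ together with $\Xi_{\ell^{xy}}(t,z)\geq\lambda$ gives $\beta\geq\beta_Q$; hence $1/\ell^{xy}(\cdot,z)=h\geq h_Q=1/\ell_Q^{xy}(\cdot,z)$, i.e.\ $\ell^{xy}(t,z)\leq\ell_Q^{xy}(t,z)$, closing the induction. (Without the explicit formula one compares $h$ and $h_Q$ by a Gr\"onwall estimate run backward from $t=1$, using $h>0$ and $h(1)=h_Q(1)=0$.)

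Finally, from $\ell^{xy}(t,\cdot)\leq\ell_Q^{xy}(t,\cdot)$ on $\{x,\ldots,y-1\}$ (both intensities vanishing at $y$) I would build a monotone coupling of $\CPBRU$ and $Q^{xy}$ --- a common exponential clock ringing at the smaller of the two rates together with an extra clock for the $Q$-copy --- which, since both start at $x$ and all jumps have size $+1$, keeps $X_t^{P}\leq X_t^{Q}$ for every $t\in[0,1)$; therefore $\CPBRU(X_t\geq x+i)\leq Q^{xy}(X_t\geq x+i)=\BINTL(\{i,\ldots,y-x\})$. Alternatively the coupling can be bypassed by comparing the tail functions $f_i(t):=\CPBRU(X_t\geq x+i)$ and $h_i(t):=Q^{xy}(X_t\geq x+i)$ inductively in $i$ via the forward equation $f_i'(t)=\ell^{xy}(t,x+i-1)\big(f_{i-1}(t)-f_i(t)\big)$, using $f_{i-1}-f_i\geq0$, the intensity comparison, and Gr\"onwall with $f_0\equiv h_0\equiv1$. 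The main obstacle is the intensity comparison of the third paragraph: it intertwines the recursion in the space variable with a time-ODE whose terminal condition is singular, so the regularity and blow-up of $\ell^{xy}$ and the invariance of the reciprocal characteristic under conditioning must be set up with care; once these are in hand the remaining steps are routine.
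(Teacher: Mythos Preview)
Your argument is correct and takes a genuinely different route from the paper's proof.

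The paper never touches the bridge intensity $\ell^{xy}$ directly. Instead, via Lemma \ref{bridgedist} it writes the joint law of the jump times under $\CPBRU$ as the density $\exp(\xi(\bt))$ on the simplex $\Delta_n$, with $\xi_j$ a primitive of $\RINV(\cdot,x+j-1)$; the benchmark bridge corresponds to the density $\exp(\lambda\sum_j t_j)$. The desired tail inequality becomes the statement that the ratio
\[
\rho(r)=\frac{\int_{\Delta_n\cap\{t_i\leq r\}}\exp(\xi(\bt))\,d\bt}{\int_{\Delta_n\cap\{t_i\leq r\}}\exp(\lambda\sum_j t_j)\,d\bt}
\]
is nondecreasing in $r$, which is proved by disintegrating on $\{t_i=s\}$ and applying an inductive ``ratio of integrals'' monotonicity lemma (Lemmas \ref{increasingratio2} and \ref{increasingratio}). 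By contrast, you work on the dynamics side: you use the invariance $\Xi_{\ell^{xy}}=\RINV$ (the same fact quoted from \cite{CLMR} in the proof of Theorem \ref{convbr}), linearize the resulting Riccati equation for $g=\ell^{xy}(\cdot,z)$ via $h=1/g$ with terminal datum $h(1^-)=0$, and run a downward induction in $z$ to obtain the pointwise intensity comparison $\ell^{xy}\leq\ell_Q^{xy}$; the marginal ordering then follows by a monotone coupling of the jump times (or, equivalently, from the forward equations, where the key identity $d_i'+\ell_Q^{xy}\,d_i=\ell_Q^{xy}\,d_{i-1}+(\ell_Q^{xy}-\ell^{xy})(f_{i-1}-f_i)\geq 0$ with $d_i=h_i-f_i$ does the job). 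What your approach buys is a stronger conclusion---a pathwise ordering $X^P_t\leq X^Q_t$, not just stochastic domination of one-time marginals---and an argument that stays entirely within Markov-chain comparison techniques. What the paper's approach buys is that it bypasses the bridge intensity altogether: there is no need to control the blow-up of $\ell^{xy}(t,z)$ as $t\to 1$ or to justify the terminal condition $h(1^-)=0$, since everything is expressed through the bounded function $\RINV$ and integrals over a compact simplex. Your last paragraph correctly identifies exactly this blow-up analysis as the place where care is needed; once the representation $h(t)=\int_t^1\exp\big(\int_t^s\beta(r)\,dr\big)\,ds$ is justified (which requires checking integrability as $s\to 1$ when $\beta$ itself diverges through the inductive term $-\ell^{xy}(\cdot,z+1)$), the rest is indeed routine.
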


\begin{remark}
 The inequalities we obtain in this Theorem are equalities when $\RINV(t,z) \equiv \lambda $ (see Lemma \ref{explicitcomputation}) , and therefore are sharp.
\end{remark}

A simple consequence of Theorem \ref{jumptimesestimate} is the following mean value estimate.
\begin{cor}\label{meanvalueest}
Let $P$ be a counting process of intensity $\ell$, $x \leq y \in \N$.
If  
\bes
\inf_{t \in \Int, z \in \N} \RINV(t,z) \geq \lambda 
\ees
then for any $x \leq y$ and $t \in [0,1]$:
\bes
E_{P^{xy}}(X_t) \leq x+ (y-x) \frac{\exp(\lambda t)-1}{\exp(\lambda)-1 }
\ees
\end{cor}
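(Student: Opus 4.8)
The plan is to obtain the bound by summing, over the level $i$, the tail estimates provided by Theorem \ref{jumptimesestimate}. First I would note that the hypothesis $\inf_{t\in\Int,\,z\in\N}\RINV(t,z)\ge\lambda$ of the corollary is a fortiori the hypothesis \eqref{eq:densityest} of Theorem \ref{jumptimesestimate}, since $\{z\in\N:\ x\le z\le y-1\}\subseteq\N$. Hence that theorem applies and yields, for every $1\le i\le y-x$ and every $t\in[0,1]$,
\bes
\CPBRU(X_t\ge x+i)\ \le\ \BINTL(\{i,\dots,y-x\}).
\ees

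Next I would use that, under $\CPBRU=P^{xy}$, the canonical path is non-decreasing with $X_0=x$ and $X_1=y$, so that $X_t-x$ takes values in $\{0,1,\dots,y-x\}$ and the tail-sum (layer-cake) identity for integer-valued non-negative random variables is legitimate:
\bes
E_{P^{xy}}(X_t)-x\ =\ \sum_{i=1}^{y-x}\CPBRU(X_t\ge x+i)\ \le\ \sum_{i=1}^{y-x}\BINTL(\{i,\dots,y-x\}).
\ees
Finally, writing $N=y-x$ and $p=\pi_{\lambda}(t)$ and exchanging the order of summation, one has $\sum_{i=1}^{N}\cB_{N,p}(\{i,\dots,N\})=\sum_{k=1}^{N}k\,\cB_{N,p}(k)=Np$, i.e. the right-hand side above is the mean of $\BINTL$, equal to $(y-x)\pi_{\lambda}(t)=(y-x)\frac{\exp(\lambda t)-1}{\exp(\lambda)-1}$ (with the value at $\lambda=0$ read by continuity as $t$). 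Combining the two displays gives the claim.

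I do not expect any genuine obstacle here: the substance of the statement is entirely contained in Theorem \ref{jumptimesestimate}, and the only points deserving a word of justification are the non-negativity and boundedness of $X_t-x$ under the bridge (so that the tail-sum representation of $E_{P^{xy}}(X_t-x)$ holds) and the elementary evaluation of the binomial mean.
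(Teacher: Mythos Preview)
Your proof is correct and is essentially the same as the paper's: both sum the tail estimates of Theorem~\ref{jumptimesestimate} over $i=1,\dots,y-x$ via the layer-cake identity and then recognize the resulting sum as the binomial mean $(y-x)\pi_{\lambda}(t)$. The paper phrases the tail sum as $\sum_i P^{xy}(T_i\le t)$ rather than $\sum_i P^{xy}(X_t\ge x+i)$, but since $\{T_i\le t\}=\{X_t\ge x+i\}$ these are the same computation.
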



\subsection{Law of large numbers}
Here, we prove an asymptotic result under the hypothesis that the $\RINV(t,z)$ converges as $z \rightarrow + \infty$ to $\lambda \in \R$.
We will see  that as $n \rightarrow + \infty$, the bridge of length one between $0$ and $n$, after a suitable rescaling, converges to the deterministic curve $t \mapsto \pi_{\lambda}(t)$ (see Figure \ref{fig-1}).

\begin{theorem}\label{thm:lln}
Let $P$ be a Markov counting process of intensity $\ell$. Assume that for some $\lambda \in  \R$:
\be\label{limitass}
\lim_{z \rightarrow + \infty } \sup_{t \in [0,1]} \vert \RINV(t,z) - \lambda \vert =0
\ee
Then for every $\varepsilon > 0$: 
\bes
\lim_{N \rightarrow + \infty }P^{0N}\left(\sup_{t \in [0,1]} \Big\vert \frac{1}{N} X_t - \pi_{\lambda}(t)  \Big\vert \leq \varepsilon \right) =0
\ees
\end{theorem}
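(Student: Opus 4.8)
The displayed limit should read ``$=1$'': equivalently, writing $A_N^\varepsilon:=\{\sup_{t\in[0,1]}|N^{-1}X_t-\pi_\lambda(t)|>\varepsilon\}$, the assertion is that $P^{0N}(A_N^\varepsilon)\to0$ for every $\varepsilon>0$, i.e.\ that $N^{-1}X_\cdot$ converges to the deterministic curve $\pi_\lambda$ of \eqref{pilambda} in $P^{0N}$-probability, uniformly on $[0,1]$. (That $P^{0N}(A_N^\varepsilon)\to0$ and not $\to1$ is already visible in the Poisson case $\lambda=0$: there $N^{-1}X_\cdot$ is the empirical distribution function of $N$ i.i.d.\ uniforms and $\pi_0(t)=t$, so Glivenko--Cantelli applies.) I would prove this by first reducing to fixed times. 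Under $P^{0N}$ both $t\mapsto N^{-1}X_t$ and $t\mapsto\pi_\lambda(t)$ are non-decreasing, $\pi_\lambda$ is continuous, and $\pi_\lambda(0)=0=N^{-1}X_0$, $\pi_\lambda(1)=1=N^{-1}X_1$; a monotonicity (Dini/P\'olya-type) argument then gives, for any grid $0=t_0<\dots<t_K=1$ with $\pi_\lambda(t_{j+1})-\pi_\lambda(t_j)<\varepsilon/2$, that $\sup_t|N^{-1}X_t-\pi_\lambda(t)|\le\max_{0\le j\le K}|N^{-1}X_{t_j}-\pi_\lambda(t_j)|+\varepsilon/2$. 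Since the endpoints contribute $0$, a union bound reduces the theorem to showing that for each fixed $t\in(0,1)$ and each $\eta>0$, $P^{0N}(|N^{-1}X_t-\pi_\lambda(t)|>\eta)\to0$.

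Next I would observe that \eqref{limitass} together with i)--ii) makes $\RINV$ globally bounded, say $|\RINV|\le M$ (for $z$ large it lies in $[\lambda-\delta,\lambda+\delta]$, and the finitely many remaining heights give continuous functions on $[0,1]$), and then localise the bridge away from the origin. Fix $t\in(0,1)$, $\eta>0$, a small $\delta>0$, a height $z_0$ with $|\RINV(r,z)-\lambda|\le\delta$ for all $r\in[0,1]$ and all $z\ge z_0$, and a small $s\in(0,t)$. Theorem~\ref{jumptimesestimate}, applied to $P^{0N}$ with the crude global bounds $-M\le\RINV\le M$ and at the single time $s$, yields
\[
P^{0N}(X_s<z_0)\le 1-\cB_{N,\pi_M(s)}(\{z_0,\dots,N\}),\qquad P^{0N}\big(X_s>\kappa(s)N\big)\le\cB_{N,\pi_{-M}(s)}(\{\lceil\kappa(s)N\rceil,\dots,N\}),
\]
where $\kappa(s):=2\max(\pi_M(s),\pi_{-M}(s))$. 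As $z_0$ is fixed while $\pi_{\pm M}(s)>0$, the first right-hand side tends to $0$ (binomial mean $N\pi_M(s)\to\infty$) and the second tends to $0$ ($\kappa(s)$ exceeds the success probability $\pi_{-M}(s)$), while $\kappa(s)\downarrow0$ as $s\downarrow0$. Hence the event $G_N:=\{z_0\le X_s\le\kappa(s)N\}$ satisfies $P^{0N}(G_N)\to1$.

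The main difficulty is the next step. The hypothesis \eqref{eq:densityest} of Theorem~\ref{jumptimesestimate} demands a bound on $\RINV(\cdot,z)$ for \emph{all} heights $z\in\{0,\dots,N-1\}$, whereas \eqref{limitass} gives no control near $z=0$; the remedy is to condition the bridge so that only heights $\ge z_0$ are visited. On $\{X_s=m\}$ with $z_0\le m\le\kappa(s)N$, the Markov property of bridges identifies the law of $(X_r)_{r\in[s,1]}$ under $P^{0N}(\cdot\mid X_s=m)$ with the bridge $P^{mN}_{s,1}$ of $P$ from $(s,m)$ to $(1,N)$; every such path stays in $\{m,\dots,N\}\subseteq\{z\ge z_0\}$, so $\RINV(r,X_{r^-})\in[\lambda-\delta,\lambda+\delta]$ along it. Rescaling time affinely from $[s,1]$ onto $[0,1]$ multiplies the reciprocal characteristic by the factor $1-s$, so Theorem~\ref{jumptimesestimate} applied to the rescaled bridge --- whose reciprocal characteristic on $\{z\ge z_0\}$ lies in $[(1-s)(\lambda-\delta),(1-s)(\lambda+\delta)]$ --- gives, for $t\in[s,1]$, $\rho:=(t-s)/(1-s)$ and $1\le i\le N-m$,
\[
\cB_{N-m,\,\pi_{(1-s)(\lambda+\delta)}(\rho)}(\{i,\dots,N-m\})\ \le\ P^{0N}(X_t\ge m+i\mid X_s=m)\ \le\ \cB_{N-m,\,\pi_{(1-s)(\lambda-\delta)}(\rho)}(\{i,\dots,N-m\}).
\]
The two points calling for care are the verification that the reciprocal characteristic scales by $1-s$ under the time change (so $(1-s)(\lambda\pm\delta)\to\lambda$ as $s,\delta\to0$) and the uniformity of these binomial bounds over the random height $m\in\{z_0,\dots,\lfloor\kappa(s)N\rfloor\}$.

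Finally I would conclude with a deviation estimate. Fix $t\in(0,1)$, $\eta>0$; using joint continuity of $(\mu,\rho)\mapsto\pi_\mu(\rho)$ on $\R\times[0,1]$ and of $s\mapsto\rho(t)$, choose --- before letting $N\to\infty$ --- $\delta>0$ and $s\in(0,t)$ so small that $\kappa(s)$ is as small as wanted and $|\pi_{(1-s)(\lambda\pm\delta)}(\rho(t))-\pi_\lambda(t)|<\eta/4$. On $G_N\cap\{X_s=m\}$, the choice $i=\lceil(\pi_\lambda(t)+\eta)N\rceil-m$ makes $i-(N-m)\pi_{(1-s)(\lambda-\delta)}(\rho(t))\ge\tfrac{1}{2}\eta N$ uniformly over $z_0\le m\le\kappa(s)N$, so the upper inequality of the sandwich together with a one-sided Chebyshev bound gives $P^{0N}(X_t\ge m+i\mid X_s=m)=O_\eta(1/N)$ uniformly in $m$; summing against $P^{0N}(X_s=\cdot)$ over $G_N$ and adding $P^{0N}(G_N^{c})$ yields $P^{0N}(N^{-1}X_t\ge\pi_\lambda(t)+\eta)\to0$. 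Symmetrically, the lower inequality of the sandwich with $i=\lfloor(\pi_\lambda(t)-\eta)N\rfloor-m+1$ gives $P^{0N}(N^{-1}X_t\le\pi_\lambda(t)-\eta)\to0$ (the constraints being void when $\pi_\lambda(t)\pm\eta$ falls outside $[0,1]$). Combined with the reduction of the first paragraph, this establishes $P^{0N}(A_N^\varepsilon)\to0$ for every $\varepsilon>0$.
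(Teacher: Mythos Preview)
Your proof is correct and follows the same overall strategy as the paper: reduce uniform convergence to one--time marginals via a monotonicity argument, localise the bridge to heights $\ge z_0$ where \eqref{limitass} provides the bound $\lambda-\delta\le\RINV\le\lambda+\delta$, sandwich the marginal of the conditioned bridge between binomial tails via Theorem~\ref{jumptimesestimate}, and conclude by a binomial concentration estimate.

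The execution differs in two places. First, the paper localises by conditioning on the \emph{stopping time} $T_k$ (the $k$--th jump); since $X_{T_k}=k$ deterministically, this avoids your two--sided control $z_0\le X_s\le\kappa(s)N$ of the height at a fixed time, and for the upper tail the inclusion $\{N^{-1}X_t\ge\pi_\lambda(t)+\varepsilon\}\subseteq\{T_k\le t\}$ makes the ``good event'' automatic. Your fixed--time conditioning is slightly heavier but has the advantage of treating the upper and lower tails symmetrically. Second, the paper records a shifted version of Theorem~\ref{jumptimesestimate} (Proposition~\ref{marginalestshift}) for bridges on $[s,1]$, whereas you rescale time to $[0,1]$ and track the factor $(1-s)$ in the reciprocal characteristic; these are equivalent, since $\pi_{(1-s)\mu}\big((t-s)/(1-s)\big)=\pi^{s,1}_\mu(t)$, so your rescaling is in effect a rederivation of the paper's proposition. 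Finally, the paper invokes the law of large numbers for the binomial limit while you use Chebyshev; either suffices.
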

\newpage
  \begin{figure}[ht!]
\centering
 \includegraphics[height=9cm,width=12cm]{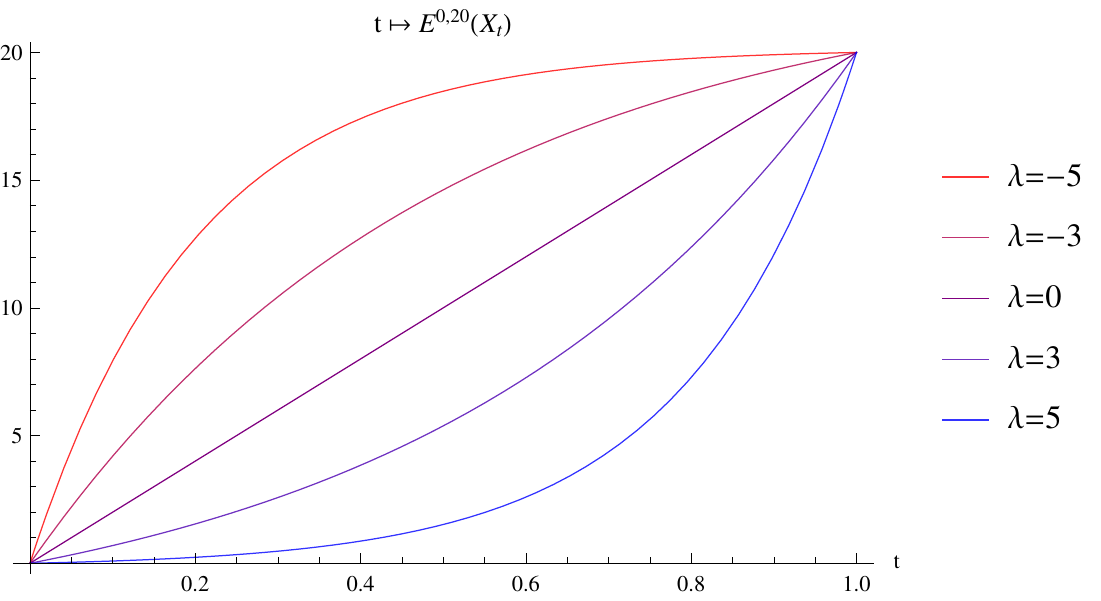}
 \includegraphics[height=9cm,width=12cm]{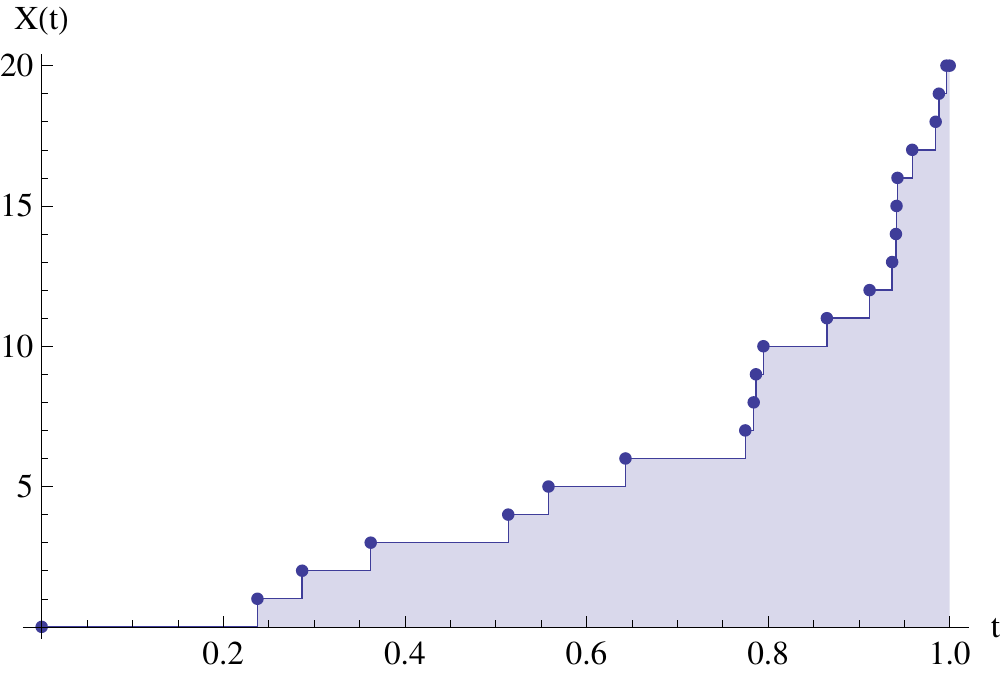}
\caption{Top: We plot for $\lambda \in \{ -5,-3,0,3,5\}$ the function $t \mapsto E_{P^{0,20}}(X_t)$, where the intensity $\ell$ of $P$ satisfies $\RINV \equiv \lambda$. For negative values of $\lambda$ we have a concave function, whereas for positive values a convex one. Moreover, we see that, as a function of $\lambda$, $ E_{P^{0,20}}(X_t)$ is decreasing. Bottom:  A sample path of a lazy bridge $(\lambda=3$): it displays very little activity close to zero: almost all jump times accumulate around $t=1$.}
 \label{fig-1}
\end{figure}
\section{Proofs of the results}\label{subs:proof}
\subsection*{Proof of Theorem \ref{convbr} }\label{subs:convexity}
The proof of Theorem \ref{convbr} builds on the earlier results of \cite{CLMR} and on Lemma \ref{conv}below. There, we prove the statement under $P$ instead of $P^{xy}$. Unless in the rest of the paper, we do not assume $\ell$ to satisfy point iii) in the proof of this theorem. This is because we will eventually  apply it  to a bridge $P^{xy}$, whose intensity does not satisfy this condition.
\begin{lemma}\label{conv}
Let $P$ be a Markov counting process of intensity $\ell$. 
Then 
\bes
\inf_{t \in \Int, z \in \N} \RINV(t,z) \geq 0 
\ees
if and only if for all $s,x \in [0,1] \times \N$ the function $ [s,1]\ni t \mapsto \ECPSX(X_t)$ is convex.\\
Conversely, 
\bes
\sup_{t \in \Int, z \in \N} \RINV(t,z) \leq 0 
\ees
if and only if for all $s,x \in [0,1] \times \N$ the function $ [s,1]\ni t \mapsto \ECPSX(X_t)$ is concave.
\end{lemma}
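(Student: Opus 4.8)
My strategy is to compute the second time-derivative of $t \mapsto \ECPSX(X_t)$ and show that it equals an expectation involving $\RINV$, so that the sign of $\RINV$ controls convexity. The natural tool is the duality formula, but stated for $P^x_s$ rather than $P^{xy}$; since the excerpt only gives the duality formula for the full bridge $P^{xy}$ (Theorem \ref{thm:derivcharacrec}), the first step is to obtain the analogous characterization for $P^x_s$, which conditions only on the initial state. One expects a formula of the shape
\[
E_{P^x_s}\!\big(\cD_u \Phi\big) = E_{P^x_s}\!\Big(\Phi \int_s^1 \big[\dot u(t) + \RINV(t,X_{t^-}) u(t)\big]\, dX_t\Big)
\]
for test functions $\Phi$ and directions $u$ vanishing at the endpoints of $[s,1]$, possibly with a boundary correction term since we no longer pin the right endpoint; this should follow either by integrating the bridge formula over the terminal value $y$ against the law of $X_1$, or by rerunning the Malliavin-calculus argument of \cite{CLMR} without the terminal conditioning. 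This reduction is the first thing I would nail down.

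Next I would apply this duality formula with a judiciously chosen test function to extract $\frac{d^2}{dt^2} \ECPSX(X_t)$. The cleanest route: write $\ECPSX(X_t)$ using $X_t - X_s = A(t) - A(s) + (\text{martingale})$, so $\frac{d}{dt}\ECPSX(X_t) = \ECPSX\big(\ell(t,X_t)\big)$ by the Markov property and the definition of the compensator. Differentiating once more,
\[
\frac{d^2}{dt^2}\ECPSX(X_t) = \ECPSX\big(\partial_t \ell(t,X_t)\big) + \ECPSX\big(\ell(t,X_t)\,[\ell(t,X_t+1)-\ell(t,X_t)]\big),
\]
where the second term comes from differentiating through the semigroup (the generator acting on $z \mapsto \ell(t,z)$ evaluated via the jump of size $+1$). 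Now factor out $\ell(t,X_t)$: when $\ell(t,X_t) \neq 0$ the bracket becomes exactly $\ell(t,X_t)\,\RINV(t,X_t)$, and when $\ell(t,X_t)=0$ one checks the whole expression vanishes (consistent with the convention $\RINV := 0$ there). Hence
\[
\frac{d^2}{dt^2}\ECPSX(X_t) = \ECPSX\big(\ell(t,X_t)\,\RINV(t,X_t)\big).
\]
Since $\ell \geq 0$ always, $\RINV \geq 0$ forces the second derivative to be $\geq 0$, giving convexity; the concave case is identical with reversed inequalities. (This is in fact the clean way to prove the lemma and it may not even require the duality formula — only the interpolation/smoothness needed to differentiate twice under the expectation; the duality formula would be the backup if differentiating through the semigroup needs more justification, and it is surely how the author intends to connect to \cite{CLMR}.)

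For the converse directions (convexity for all $s, x$ implies $\RINV \geq 0$), I would argue by contradiction: if $\RINV(t_0, z_0) < 0$ for some $(t_0, z_0)$, then by continuity (hypothesis i) gives $C^1$ in $t$) it stays negative on a neighborhood, and taking $s$ slightly below $t_0$ and $x = z_0$, the process $P^x_s$ places positive mass — here hypothesis iii), $\ell > 0$, and the full-support initial law are essential — on the event $X_{t} = z_0$ for $t$ near $t_0$, so the identity above makes $\frac{d^2}{dt^2}\ECPSX(X_t) < 0$ at $t_0$, contradicting convexity. The main obstacle I anticipate is the first step: rigorously justifying differentiation under the expectation sign (integrability of $\partial_t \ell(t, X_t)$ and of $\ell(t,X_t)^2$, which is where the global bounds in hypothesis ii) on the increments of $\ell$ do the work, controlling moments of $X_t$ via a comparison with a linear birth process) and, relatedly, establishing the $P^x_s$-version of the duality formula cleanly enough to cite. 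Everything after that is a short computation.
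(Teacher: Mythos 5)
Your forward direction is essentially the paper's own proof: differentiate $\varphi_t:=\ECPSX(X_t)$ twice to get $\dot\varphi_t=\ECPSX(\ell(t,X_{\tmin}))$ and $\ddot\varphi_t=\ECPSX\big(\partial_t\ell(t,X_{\tmin})+(\ell(t,X_{\tmin}+1)-\ell(t,X_{\tmin}))\ell(t,X_{\tmin})\big)=\ECPSX\big(\RINV(t,X_{\tmin})\ell(t,X_{\tmin})\big)$, then read off the sign. You are also right that no duality formula is needed: the paper never invokes Theorem \ref{thm:derivcharacrec} in this lemma, so your proposed "first step" of deriving a $P^x_s$-version of \eqref{dualityformula} is a detour you can delete. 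One point you wave at ("one checks the whole expression vanishes" when $\ell(t,X_t)=0$) deserves to be made explicit, since the lemma is later applied to bridge intensities that do vanish: because $\ell(\cdot,z)$ is $C^1$ and nonnegative, an interior zero is a minimum, so $\partial_t\ell(t,z)=0$ there; this is exactly how the paper reconciles the formula with the convention $\RINV=0$ on $\{\ell=0\}$.

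The genuine gap is in your converse. From $\RINV(t_0,z_0)<0$ you take $s$ slightly below $t_0$ and $x=z_0$, and claim that positive mass on $\{X_{t_0}=z_0\}$ forces $\ddot\varphi_{t_0}<0$. It does not: $\ddot\varphi_{t_0}=\ECPSX\big(\RINV(t_0,X_{t_0^-})\ell(t_0,X_{t_0^-})\big)$ averages over all reachable states, and the states $z>z_0$ (which carry positive probability as soon as $s<t_0$, precisely because of hypothesis iii)) may contribute positive terms that dominate the negative one; ruling this out would require probability/moment estimates you have not supplied. The paper's argument removes the problem entirely: take $s=t_0$, $x=z_0$ and evaluate the one-sided second derivative at the left endpoint $t=s$ of $[s,1]$, where $X_s=x$ holds $P^x_s$-almost surely, so the expectation collapses to the single term $\ddot\varphi_s=\RINV(s,x)\ell(s,x)$. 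Convexity gives $\ddot\varphi_s\geq 0$, and $\ell(s,x)>0$ then yields $\RINV(s,x)\geq 0$ (if $\ell(s,x)=0$ then $\RINV(s,x)=0$ by convention and there is nothing to prove). Finally, the full support of the initial law plays no role here, since $P^x_s$ already conditions on $X_s=x$.
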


\begin{proof}
We only prove the convexity statement, the other being analogous. \\
$(\Rightarrow)$ Fix $s \in \Int, x \in \N$ and define $\varphi_t:= \ECPSX(X_t)$ for $t \in [s,1]$.
Differentiating twice we obtain:
\beas
\dot{\varphi}_t &=& \ECPSX(\ell(t,X_{\tmin})) \\
 \ddot{\varphi}_t &=& \ECPSX( \partial_t \ell(t,X_{\tmin})+( \ell(t,X_{\tmin}+1)-\ell(t,X_{\tmin}) ) \ell(t,X_{\tmin})  ) \\
 &=&  \ECPSX( \RINV(t,X_{\tmin}) \ell(t,X_{\tmin}) \mathbf{1}_{\{\ell(t,X_{\tmin}) >0\} } + \partial_t \ell(t,X_{\tmin}) \mathbf{1}_{\{\ell(t,X_{\tmin}) =0 \}})
\eeas
Since $ \ell(s,z)$ is continuously differentiable and for all $t \in [0,1]$ $P(X_{t}-X_{\tmin} \neq 0) = 0$, we have that, for a fixed $t$, $\ell(t,X_{t^-})=0$ a.s. implies that $\partial_t \ell(t,X_{t^-})=0$. Therefore by definition of $\RINV$ we have for all $t$:
\bes \ECPSX(\partial_t \ell(t,X_{\tmin}) \mathbf{1}_{\{\ell(t,X_{\tmin}) =0 \}}) =  \ECPSX(0 \mathbf{1}_{\{\ell(t,X_{\tmin}) =0 \}}) = \ECPSX(\RINV(t,X_{\tmin})\ell(t,X_{\tmin}) \mathbf{1}_{\{\ell(t,X_{\tmin}) =0 \}})
\ees
and therefore:
\bes
 \ddot{\varphi}_t =\ECPSX(\RINV(t,X_{\tmin})\ell(t,X_{\tmin}) )
\ees
If $\inf_{t,z}\RINV(t,z) \geq 0$, we then have $\ddot{\varphi}_t \geq 0$, which gives the convexity. \\
$(\Leftarrow)$ Consider $s \in \Int$,$x \in \N$ such that $\ell(s,x)>0$ (note that if $\ell(s,x)=0$, $\RINV(s,x)=0$ and there is nothing to prove). Then, defining $\varphi$ as above and repeating the same computations we obtain, by evaluating the second derivative at $s$, and exploiting the fact that $X_s=x$ almost surely:
\bes
\ddot{\varphi}_s = \RINV(s,x) \ell(s,x)
\ees 
Since $\ddot{\varphi}_s \geq 0 $ by assumption and $\ell(s,x)>0$, then $\RINV(s,x) \geq 0$. Because of the arbitrary choice of $s$ and $x$ the conclusion follows.
\end{proof}
The proof of Theorem \ref{convbr} becomes now almost straightforward.
\begin{proof}
We exploit the fact $\CPBR$ is also a counting process whose intensity $\jbr (t,z)$ is continuously differentiable on $[0,1) \times \N$ and  satisfies:
\bes
\Xi_{\jbr} (t,z) = \begin{cases} \RINV(t,z), \quad & \text{if $x \leq z \leq y-1 $, $0<t<1$} \\ 0, & \text{ otherwise }  \end{cases}
\ees
This is proven in \cite[Theorem 1.11]{CLMR}. An application of Lemma \ref{conv} gives then the conclusion.
\end{proof}
\subsection*{Proof of Theorem \ref{jumptimesestimate}}\label{subs:proofestimate}
Prior to the proof, we introduce some useful notation which simplifies many computations therein. We call $n:=y-x$ the total number of jumps that the $xy$ bridge makes. Moreover, we adopt the following conventions:
\begin{itemize} 
\item For $(t_1,..,t_n) \in \Int^n$ we adopt the compact notation $\bt$. For any $1 \leq i \leq k \leq n$ the vector $(t_i,..,t_k)$ is denoted $\bt_{i,k}$.

\item For $1 \leq i \leq k \leq n$, $0 \leq s \leq r \leq 1$, we call $\DINSR$ the set :
$$
\DINSR:= \{ (t_{i},...,t_{k}) \in [0,1]^{k-i+1} : s < t_{i} < t_{i+1} <..<t_k <r \}
$$
When $s=0,r=1,i=1,k=n$, we simply write $\Delta_n$ .
\item
For any $1 \leq j \leq n$ we define $ \xi_j:\Int \longrightarrow \R$ as the only primitive of $t \mapsto \RINV(t,x+j-1)$ such that $\xi_j(0)=0$. We then define \bes \xi:\DN \longrightarrow \R, \quad 
\xi(\bt) = \sum_{j=1}^n \xi_j(t_j) \ees 

We also define for all $1\leq i \leq k \leq n$:
\bes
\xi_{i,k}(\bt_{i,k}) = \sum_{j=i}^{k} \xi_j(t_j)
\ees
\end{itemize}

The proof is structured as follows: we first prove two Lemmas, Lemma \ref{bridgedist} Lemma \ref{explicitcomputation}, which contain the two main ingredients of the proof. Then we do the proof of Theorem \ref{jumptimesestimate}, relying on the more technical Lemma \ref{increasingratio2}, which is proven later, together with the simple auxiliary Lemma \ref{increasingratio}.
\begin{lemma}\label{bridgedist}
The distribution of jump times $(T_1,..,T_n)$ under $P^{xy}$ is given by:
\be\label{eq:lawofjtimes}
\forall A \subseteq \Delta_n, \quad \CPBRU(T_1,..,T_n \in A) = \frac{1}{Z_{\ell}} \int_{A} \exp \big(\xi(\bt) \big) d\bt
\ee
\end{lemma}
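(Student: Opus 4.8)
The plan is to exhibit a single candidate measure $Q\in\mathcal{P}(\Om)$ that is carried by paths with $X_0=x$ and exactly $n=y-x$ jumps, whose jump times have the density claimed in \eqref{eq:lawofjtimes}, and then to identify $Q$ with $\CPBRU$ by the uniqueness statement in Theorem~\ref{thm:derivcharacrec}. First one checks that $Z_\ell:=\int_{\DN}\exp(\xi(\bt))\,d\bt$ is finite and positive: for the fixed pair $x\le y$, each index $x+j-1$ occurring in $\xi_j$ lies in the finite set $\{x,\dots,y-1\}$, so by i) and ii) the map $t\mapsto\RINV(t,x+j-1)$ is continuous on $[0,1]$, hence each $\xi_j$, and therefore $\xi$, is bounded on $\DN$; since $\DN$ has Lebesgue measure $1/n!$ and the integrand is strictly positive, $0<Z_\ell<\infty$. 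Let $Q$ be the law of the step function that starts at $x$ and jumps by $+1$ at the times $(T_1,\dots,T_n)$ whose joint law on $\DN$ has density $Z_\ell^{-1}\exp(\xi(\bt))$. Then $Q\in\mathcal{P}(\Om)$, $Q(X_0=x,X_1=y)=1$, and $Q$ satisfies \eqref{eq:lawofjtimes}; so the lemma follows once we verify that $Q$ obeys the duality formula \eqref{dualityformula}.

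Since $Q$ is carried by paths with $X_0=x$ and exactly $n$ jumps, it is enough to test \eqref{dualityformula} against test functions of the form $\Phi=\varphi(x;T_1,\dots,T_n)$; write $\varphi(\bt)$ for short. From $\cD_u\Phi=-\sum_{j=1}^n\partial_{t_j}\varphi(\bT)\,u(T_j)$ we get
\[
E_Q(\cD_u\Phi)=-\frac{1}{Z_\ell}\sum_{j=1}^n\int_{\DN}\partial_{t_j}\varphi(\bt)\,u(t_j)\,e^{\xi(\bt)}\,d\bt ,
\]
whereas on $\mathrm{supp}(Q)$ one has $X_{T_i^-}=x+i-1$, so that
\[
E_Q\Big(\Phi\int_0^1\big[\dot u(t)+\RINV(t,X_{\tmin})u(t)\big]dX_t\Big)=\frac{1}{Z_\ell}\int_{\DN}\varphi(\bt)\sum_{i=1}^n\big[\dot u(t_i)+\RINV(t_i,x+i-1)u(t_i)\big]e^{\xi(\bt)}\,d\bt .
\]
The identity between these two displays is obtained by integrating by parts in each variable $t_j$ separately: by Fubini, with the other coordinates fixed, $t_j$ runs over $(t_{j-1},t_{j+1})$ (conventions $t_0=0$, $t_{n+1}=1$), and since $\partial_{t_j}e^{\xi(\bt)}=\dot\xi_j(t_j)e^{\xi(\bt)}=\RINV(t_j,x+j-1)e^{\xi(\bt)}$, the ``bulk'' term produced by the $j$-th integration by parts is, after accounting for the overall sign in $\cD_u$, exactly the $j$-th summand on the right-hand side of the second display.

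The step requiring care — and the main obstacle — is the vanishing of the boundary terms generated by these integrations by parts. The $j$-th one contributes $\big[\varphi\,u(t_j)\,e^{\xi}\big]$ evaluated at $t_j=t_{j+1}$ with a $+$ sign and at $t_j=t_{j-1}$ with a $-$ sign, once the remaining $n-1$ coordinates are integrated over the appropriate simplex. For $j=1$ the lower endpoint is $t_1=0$ and for $j=n$ the upper endpoint is $t_n=1$, so these two terms vanish precisely because $u\in\cU$, i.e. $u(0)=u(1)=0$. Every remaining ``diagonal'' term cancels against its neighbour: the contribution at $\{t_j=t_{j+1}\}$ from the $j$-th integration by parts and the contribution at $\{t_{j+1}=t_j\}$ from the $(j+1)$-st one are, after setting the two merged coordinates equal to a common value $s$, integrals of the same function $\varphi\,u(s)\,e^{\xi}$ over the same $(n-1)$-dimensional set, and they enter with opposite signs. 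Hence all boundary terms cancel.

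Once this is established, $E_Q(\cD_u\Phi)$ equals the right-hand side of \eqref{dualityformula} for every test function $\Phi$ and every $u\in\cU$, and $Q(X_0=x,X_1=y)=1$; by the uniqueness in Theorem~\ref{thm:derivcharacrec} we conclude $Q=\CPBRU$, which is exactly \eqref{eq:lawofjtimes}. The only delicate bookkeeping in the whole argument is keeping track of signs and of the merged-coordinate domains in the telescoping of boundary terms; everything else is routine, since $\varphi$ is smooth and each $\xi_j$ is $C^1$ on $[0,1]$.
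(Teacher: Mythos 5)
Your proof is correct, and while it shares the paper's overall skeleton (build a candidate measure $Q$ supported on paths from $x$ to $y$ with the claimed jump-time density, then identify $Q=P^{xy}$ via the uniqueness in Theorem~\ref{thm:derivcharacrec}), the way you verify the duality formula for $Q$ is genuinely different. The paper defines $Q$ by the density $Z_\ell^{-1}\exp(\xi(\mathbf{T}))$ with respect to the Poisson bridge $R^{xy}$, invokes the already-known duality formula for $R^{xy}$ as a black box, and gets \eqref{dualityformula} for $Q$ in three lines from the Leibniz rule for $\mathcal{D}_u$ together with $\partial_{t_j}\xi(\mathbf{T})=\Xi_\ell(T_j,X_{T_j^-})$; the support, normalization and reference-measure issues are then automatic. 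You instead construct $Q$ from scratch on the simplex and verify \eqref{dualityformula} by a bare-hands integration by parts in each coordinate, with the boundary terms at $t_1=0$ and $t_n=1$ killed by $u\in\mathcal{U}$ and the interior ``diagonal'' terms cancelling telescopically between consecutive indices. Your route is more elementary and self-contained (it does not presuppose the Poisson-bridge duality \eqref{poisson duality}, only the uniqueness half of Theorem~\ref{thm:derivcharacrec}), at the price of the sign/domain bookkeeping you flag yourself — note in passing that with the minus sign of $\mathcal{D}_u$ included, the evaluation at $t_j=t_{j+1}$ actually carries a minus and that at $t_j=t_{j-1}$ a plus, the opposite of what you wrote, though the pairwise cancellation is unaffected. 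The one point you gloss over slightly is the reduction to test functions with exactly $m=n$ arguments (for $m\neq n$ one needs the convention for $T_j$ with $j$ exceeding the number of jumps), but the paper itself waives such details, so this is not a substantive gap.
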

\begin{proof}
For convenience, we denote the vector $(T_1,..,T_n)$ by $\bT$.
Let us define the probability measure $Q \in \mathcal{P}(\Omega)$ by:
\bes
dQ = \frac{1}{Z_{\ell}} \exp(\xi(\bT) ) dR^{xy}
\ees
where $R^{xy}$ is the Poisson bridge from $x$ to $y$. Thanks to the duality formula \eqref{dualityformula}, the Poisson bridge is characterized by:
\be\label{poisson duality}
\forall \Phi, u \in \cU, \quad  E_{R^{xy}} (\mathcal{D}_{u} \Phi)
=
E_{R^{xy} }\Big(\Phi\int _{0}^1 \dot{u}(t)\,dX_t\Big)
\ee
Consider now a test function $\Phi$, using the duality formula \eqref{dualityformula} and the fact that $\cD_u$ is a true derivative operator satisfying Leibniz's product rule:
\beas
E_{Q}(\cD_u \Phi)& =& E_{R^{xy}}\Big( \exp(\xi(\bT)) \, \cD_{u}\Phi \Big) \\
							&=& E_{R^{xy}}\Big( \cD_{u} \big[ \exp(\xi(\bT)) \, \Phi \big] \Big) -E_{R^{xy}}\Big(\Phi  \, \cD_{u} \big[\exp(\xi(\bT) ) \big] \Big) \\
							&\underbrace{=}_{\text{eq.} \, \eqref{poisson duality} + \text{eq.} \eqref{definizionederivata} }&	E_{R^{xy}}\Big( \exp(\xi(\bT)) \Phi \int_{0}^{1} \dot{u}(t) dX_t \Big) + E_{R^{xy}} \Big(\Phi \exp(\xi(\bT)\sum_{j=1}^n \underbrace{\partial_{t_j}  \xi(\bT)}_{= \RINV(T_j,X_{T^-_{j}})} u(T_j))  \Big) \\
&=&E_{Q} \Big(\Phi \int_{0}^1\big[ \dot{u}(t) + \RINV(t,X_{\tmin})u(t) \big] dX_t \Big)		\eeas

An application of Theorem  \ref{thm:derivcharacrec} allows to deduce that $Q=P^{xy}$. Therefore, the density of $P^{xy} \circ \bT^{-1} $  with respect to $R^{xy} \circ \bT^{-1} $ is proportional to $\exp(\xi(\bt))$. It is well known that $R^{xy} \circ \bT^{-1}$ is the normalized Lebesgue measure on $\Delta_n$. The conclusion then follows.
\end{proof}
Let us compute explicitly the distribution of the jump times of a  bridge of a counting process such that $\RINV$ is  constantly equal to $\lambda$.

\begin{lemma}\label{explicitcomputation}
Let $P$ be a counting process of intensity $\ell$, $ x\leq y \in \N$. Let $\ell$ be such that for some $\lambda \in \R$ :
\bes
\forall t \in \Int , \, x \leq z \leq y-1, \quad \RINV(t,z) = \lambda
\ees
Then, for all $t \in \Int, 1 \leq i \leq y-x $ we have that the law of $X_t -x $ is $\mathcal{B}_{y-x, \pi_{\lambda}(t)}$, where $\pi_{\lambda}(t)$ is defined by \eqref{pilambda}.
\end{lemma}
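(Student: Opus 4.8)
The plan is to combine Lemma \ref{bridgedist} with an elementary symmetrisation argument. Write $n := y-x$. Since $\RINV(t,z) = \lambda$ for every $t \in \Int$ and every $x \leq z \leq y-1$, each primitive $\xi_j$ introduced before Lemma \ref{bridgedist} is simply $\xi_j(t) = \lambda t$, so that $\xi(\bt) = \lambda \sum_{j=1}^n t_j$ on $\DN$. By Lemma \ref{bridgedist}, the law of the jump times $(T_1,\dots,T_n)$ under $\CPBRU$ therefore has density $Z_\ell^{-1}\exp\!\big(\lambda \sum_{j=1}^n t_j\big)$ with respect to Lebesgue measure on the ordered simplex $\DN$.

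Next I would translate the marginal of $X_t$ into an event on the jump times. Since the path starts at $x$ and increases by unit jumps exactly at $T_1 < \dots < T_n$, for $t \in (0,1)$ one has $X_t = x + \#\{j : T_j \leq t\}$, hence, with the conventions $T_0 := 0$, $T_{n+1} := 1$,
\bes
\{ X_t = x+k \} = \{ T_k \leq t < T_{k+1} \},
\ees
which up to a Lebesgue-null set is $\{0 < t_1 < \dots < t_k < t < t_{k+1} < \dots < t_n < 1\}$. On this set the exponential weight factorises over the two blocks $(t_1,\dots,t_k)$ and $(t_{k+1},\dots,t_n)$, so the probability of the event is a product of two integrals.

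The computation then rests on the identity, valid for any $a < b$ and any $m \geq 0$,
\bes
\int_{\{a < s_1 < \dots < s_m < b\}} \exp\Big(\lambda \sum_{i=1}^m s_i\Big)\, ds_1\cdots ds_m \;=\; \frac{1}{m!}\Big(\int_a^b e^{\lambda s}\,ds\Big)^m ,
\ees
obtained by summing over the $m!$ orderings of the coordinates in the cube $(a,b)^m$. Applying this with $(a,b)=(0,t)$, $m=k$; with $(a,b)=(t,1)$, $m=n-k$; and with $(a,b)=(0,1)$, $m=n$ (this last one computing $Z_\ell$), and using $\int_a^b e^{\lambda s}\,ds = (e^{\lambda b}-e^{\lambda a})/\lambda$, every power of $\lambda$ cancels and we are left with
\bes
\CPBRU(X_t = x+k) = \binom{n}{k}\Big(\frac{e^{\lambda t}-1}{e^{\lambda}-1}\Big)^{k}\Big(\frac{e^{\lambda}-e^{\lambda t}}{e^{\lambda}-1}\Big)^{n-k} ,
\ees
which is precisely $\cB_{y-x,\pi_{\lambda}(t)}(k)$ in view of \eqref{pilambda} and the identity $1-\pi_{\lambda}(t) = (e^{\lambda}-e^{\lambda t})/(e^{\lambda}-1)$.

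Finally I would dispose of the degenerate case $\lambda = 0$ (where the above is read in the limiting sense $\pi_0(t)=t$): there $\xi \equiv 0$, the density on $\DN$ is uniform, and the same symmetrisation gives the binomial $\cB_{n,t}$, in agreement with $\lim_{\lambda\to 0}\pi_{\lambda}(t)=t$; alternatively one simply invokes continuity in $\lambda$. There is no real obstacle here: the only points requiring a little care are the boundary conventions $T_0=0$, $T_{n+1}=1$ and the extreme values $k=0$, $k=n$ — automatically handled by the empty-product convention in the identity above — together with the trivial boundary times $t=0$ (where $X_0=x$ a.s.\ and $\pi_{\lambda}(0)=0$) and $t=1$ (where $X_1=y$ a.s.\ and $\pi_{\lambda}(1)=1$).
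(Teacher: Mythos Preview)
Your argument is correct and follows essentially the same route as the paper: invoke Lemma~\ref{bridgedist} to obtain the density $\exp(\lambda\sum_j t_j)$ on the ordered simplex, then symmetrise. The only cosmetic difference is that you compute the point mass $\CPBRU(X_t=x+k)$ by factorising over the two blocks $(0,t)$ and $(t,1)$, whereas the paper computes the tail $\CPBRU(X_t\ge x+i)$ and recognises it as the probability that at least $i$ among $n$ i.i.d.\ variables with density proportional to $e^{\lambda s}$ fall in $[0,t]$; both are the same symmetrisation in slightly different packaging.
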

\begin{proof}
As a direct consequence of Lemma \ref{bridgedist}
combined with the current hypothesis we have:
\be\label{eq-10}
P^{xy}(X_t \geq x+i)P^{xy}(T_i \leq t) = \frac{1}{Z_{\lambda}} \int_{\Delta_{n} \cap  \{ t_i \leq t \}} \exp( \lambda \sum_{j=1}^{n} t_j)d\bt
\ee
where $Z_{\lambda}$ is a normalization constant.
Thanks to the invariance of the integrand under permutations we have:
\bea\label{symmetrizedintegral3}
\frac{1}{Z_{\lambda}} \int_{\Delta_n \cap \{ t_i \leq t \}} \exp \big(\lambda \sum_{j=1}^n t_j \big) d\bt 
= \frac{1}{(\exp(\lambda) -1)^n} \int_{A}   \exp(\lambda \sum_{j=1}^n t_j) d\bt
\eea
with
\bes\label{symmetrizedintegral4}
A:= \bigcup_{\sigma \in S_n} \{t: t_{\sigma(1)}< ..<t_{\sigma(n)}, t_{\sigma(i)} \leq t \} 
\ees
where $S_n$ is the symmetric group with $n$ elements.
We can rewrite $A$ in a convenient way, using some simple combinatorial arguments. We have: 
\bes
A:= \{ \bt \in [0,1]^n s.t. |\{j: t_j \leq t \} | \geq i \}
\ees
But then the right hand side of \eqref{symmetrizedintegral3} is nothing but the probability that at least $i$ among $n$ i.i.d. random variables are less than $t$, each variable being supported on $[0,1]$ and having a density proportional to $\exp(\lambda s)$. The conclusion then follows after a simple calculation.
\end{proof}
We can now prove Theorem \ref{jumptimesestimate}.
\begin{proof}
Because of Lemma \ref{bridgedist} and Lemma \ref{explicitcomputation}, and the elementary fact that $\{X_t \geq x+ i \} = \{ T_i \leq t \}$, all what we need to do is to prove that for all $1 \leq i \leq n, 0 \leq t \leq 1$:
\bea\label{eq1}
\frac{1}{Z_{\ell}} \int_{\bt \in \DN \cap \{ t_i \leq t \}} \exp(\xi(\bt)) d\bt \leq \frac{1}{Z_{\lambda}} \int_{\bt \in \DN \cap \{ t_i \leq t \}} \exp \big( \lambda \sum_{j=1}^{n} t_j \big) d\bt
\eea
where $Z_{\ell},Z_{\lambda}$  are normalization constants.

To this aim,  we define the function $\rho:[0,1] \rightarrow \R_+$ by:
\bes \rho(r):= \frac{ \int_{\bt \in \DN \cap \{ t_i \leq r \}} \exp(\xi(\bt)) d\bt}{ \int_{\bt \in \DN \cap \{ t_i \leq r \}} \exp \big( \lambda \sum_{j=1}^{n} t_j \big) d\bt} \ees
Observing that $\rho(1)=\frac{Z_{\ell}}{Z_{\lambda}}$,  \eqref{eq1} is equivalent to $\rho(t) \leq \rho(1)$, and therefore it is sufficient to prove that $\rho$ is non decreasing. 
For this, we rewrite $\rho(t)$ in a convenient way, by conditioning on the value of $t_i$:
\be\label{eq:rho}
\rho(r)= \frac{\int_{0}^{r} p(s) ds}{\int_{0}^{r} q(s) ds}
\ee
with:
\beas
p(s)&:=& \int_{\bt \in \DN \cap \{  t_i = s \} }\exp \big(\xi(\bt)\big) d\bt_{1,i-1} d\bt_{i,n} \\
&=& \exp(\xi_i(s)) \int_{\Delta^{0,s}_{1,i-1}}\exp \big( \xi_{1,i-1}(\bt_{1,i-1}) \big)d\bt_{1,i-1}
\int_{\Delta^{s,1}_{i+1,n}}\exp \big( \xi_{i+1,n}(\bt_{i+1,n}) \big)d\bt_{i+1,n} \\
q(s)&:=&\int_{\bt \in \DN \cap \{t_i = s\}}\exp \big(\lambda \sum_{j=1}^n t_j \big) d\bt_{1,i-1}d\bt_{i+1,n} \\
&=& \exp(\lambda s) \int_{\Delta^{0,s}_{1,i-1}}\exp \big(\lambda\sum_{j=1}^{i-1} t_j \big)d\bt_{1,i-1}
\int_{\Delta^{s,1}_{i+1,n} }\exp \big(\lambda \sum_{j=i+1}^{n} t_j \big)d\bt_{i+1,n} 
\eeas
Let us first show that $p/q(s)$ is non decreasing. Lemma \ref{increasingratio2} ensures that both
\bes s \mapsto \frac{\int_{\Delta^{0,s}_{1,i-1} } \exp(\xi_{1,i-1}(\bt_{1,i-1}) ) d\bt_{1,i-1} }{ \int_{\Delta^{0,s}_{1,i-1}} \exp(\lambda \sum_{j=1}^{i-1} ) d\bt_{1,i-1} } \quad
\text{and}
 \quad  s \mapsto \frac{\int_{\Delta^{s,1}_{i+1,n} } \exp(\xi_{i+1,n}(\bt_{i+1,n}) ) d\bt_{i+1,n} }{ \int_{\Delta^{s,1}_{i+1,n}} \exp(\lambda \sum_{j=i+1}^{n}t_j ) d\bt_{i+1,n} }
\ees
are non decreasing in $s$. Moreover, $\exp(\xi_i(s) - \lambda s)$ is also increasing in $s$ since $\dot{\xi}_{i}(\cdot) = \RINV(\cdot,x+i-1) \geq \lambda$ by hypothesis. We have thus shown that $p/q(s)$ is non decreasing. By Lemma \ref{increasingratio} and \eqref{eq:rho} we get that $\rho$ is non decreasing too, and the conclusion follows.
\end{proof}

\begin{lemma}\label{increasingratio2}
The functions
 \bes s \mapsto \frac{\int_{\Delta^{0,s}_{1,i-1} } \exp(\xi_{1,i-1}(\bt_{1,i-1}) ) d\bt_{1,i-1} }{ \int_{\Delta^{0,s}_{1,i-1}} \exp(\lambda \sum_{j=1}^{i-1} ) d\bt_{1,i-1} }
\ees

and

\bes
s \mapsto \frac{\int_{\Delta^{s,1}_{i+1,n} } \exp(\xi_{i+1,n}(\bt_{i+1,n}) ) d\bt_{i+1,n} }{ \int_{\Delta^{s,1}_{i+1,n}} \exp(\lambda \sum_{j=i+1}^{n}t_j ) d\bt_{i+1,n} }
\ees
are non decreasing  in $s$.
\end{lemma}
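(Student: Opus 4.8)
The plan is to prove both statements by a single induction on the number of integration variables, peeling off one extremal variable at a time and invoking the one–variable comparison Lemma~\ref{increasingratio} (which asserts that if $p,q\colon[0,1]\to\R_+$ are such that $p/q$ is non-decreasing, then $r\mapsto \big(\int_0^r p(u)\,du\big)\big/\big(\int_0^r q(u)\,du\big)$ is non-decreasing).

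Treat first the left simplex. For $0\le m\le n$ set
\[
N_m(s):=\int_{\Delta^{0,s}_{1,m}}\exp\big(\xi_{1,m}(\bt_{1,m})\big)\,d\bt_{1,m},\qquad
D_m(s):=\int_{\Delta^{0,s}_{1,m}}\exp\Big(\lambda\sum_{j=1}^m t_j\Big)\,d\bt_{1,m},
\]
so that the first function in the statement is $N_{i-1}/D_{i-1}$. I claim $N_m/D_m$ is non-decreasing for every $m$; for $m=0$ the ratio is the constant $1$. For the inductive step I integrate out the largest variable $t_m=r$: since $\xi_{1,m}(\bt_{1,m})=\xi_{1,m-1}(\bt_{1,m-1})+\xi_m(r)$, Fubini gives
\[
N_m(s)=\int_0^s \exp(\xi_m(r))\,N_{m-1}(r)\,dr,\qquad
D_m(s)=\int_0^s \exp(\lambda r)\,D_{m-1}(r)\,dr .
\]
Hence $N_m/D_m=\big(\int_0^s p_m(r)\,dr\big)\big/\big(\int_0^s q_m(r)\,dr\big)$ with $p_m(r)=\exp(\xi_m(r))N_{m-1}(r)$ and $q_m(r)=\exp(\lambda r)D_{m-1}(r)$, and
\[
\frac{p_m(r)}{q_m(r)}=\exp\big(\xi_m(r)-\lambda r\big)\cdot\frac{N_{m-1}(r)}{D_{m-1}(r)} .
\]
The first factor is positive and non-decreasing, since $\tfrac{d}{dr}\big(\xi_m(r)-\lambda r\big)=\RINV(r,x+m-1)-\lambda\ge 0$ by hypothesis~\eqref{eq:densityest} (note $x\le x+m-1\le y-1$ because $1\le m\le n$); the second factor is non-negative and non-decreasing by the inductive hypothesis. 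So $p_m/q_m$ is non-decreasing, and Lemma~\ref{increasingratio} yields that $N_m/D_m$ is non-decreasing, closing the induction.

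For the right simplex one argues symmetrically, now peeling off the \emph{smallest} variable $t_{i+1}$: with $\widetilde N_k(s):=\int_{\Delta^{s,1}_{n-k+1,n}}\exp(\xi_{n-k+1,n}(\bt_{n-k+1,n}))\,d\bt_{n-k+1,n}$ and $\widetilde D_k$ defined with $\exp\big(\lambda\sum_{j=n-k+1}^n t_j\big)$ in place of $\exp(\xi_{n-k+1,n})$, the second function in the statement is $\widetilde N_{n-i}/\widetilde D_{n-i}$, and integrating out $t_{n-k+1}=r$ gives $\widetilde N_k(s)=\int_s^1\exp(\xi_{n-k+1}(r))\widetilde N_{k-1}(r)\,dr$ and $\widetilde D_k(s)=\int_s^1\exp(\lambda r)\widetilde D_{k-1}(r)\,dr$, whose integrands have ratio $\exp(\xi_{n-k+1}(r)-\lambda r)\cdot\widetilde N_{k-1}(r)/\widetilde D_{k-1}(r)$ — once more a product of non-negative non-decreasing functions, using $\RINV(\,\cdot\,,x+n-k)\ge\lambda$, valid for $1\le k\le n$ since then $x\le x+n-k\le y-1$. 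One concludes with the $[s,1]$–version of Lemma~\ref{increasingratio}, valid by the same one–line computation (one checks $\big(\int_s^1 p\big)\big/\big(\int_s^1 q\big)\ge (p/q)(s)$ when $p/q$ is non-decreasing, since then $p/q\ge(p/q)(s)$ throughout $[s,1]$), or by the change of variables $t_j\mapsto 1-t_j$ reducing to the left–simplex case.

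I do not expect any genuine obstacle: the argument is a bare induction, and the only point requiring care is the bookkeeping, namely verifying at each level that the index of the reciprocal characteristic in play lies in $\{x,\dots,y-1\}$, where \eqref{eq:densityest} is assumed, and that Lemma~\ref{increasingratio} is applied in the correct orientation. Positivity of all denominators for $s\in(0,1)$ and continuity of $N_{m-1},D_{m-1}$ (and of $\widetilde N_{k-1},\widetilde D_{k-1}$), which underlies the fundamental theorem of calculus implicit in Lemma~\ref{increasingratio}, are immediate from the continuity of $\ell$ and of $\RINV$.
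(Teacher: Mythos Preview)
Your proof is correct and follows essentially the same inductive strategy as the paper's: both peel off the extremal integration variable, rewrite the simplex integral as $\int_0^s$ (respectively $\int_s^1$) of a product, observe the integrand ratio is non-decreasing via the hypothesis $\RINV\ge\lambda$ together with the inductive hypothesis, and conclude by Lemma~\ref{increasingratio}. Your write-up is in fact a bit more careful than the paper's on the index bookkeeping and on the right-simplex case (which the paper dismisses as ``completely analogous''); the only caveat is that your suggested change of variables $t_j\mapsto 1-t_j$ as an alternative route is not quite a direct reduction (it flips monotonicity directions), but your primary $[s,1]$-variant of Lemma~\ref{increasingratio} is valid and suffices.
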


\begin{proof}
We only prove the first statement, the other case being completely analogous.
We work by induction on $i$. For the case $i=2$, we have to show that 
\bes
s \mapsto \frac{\int_{0}^{s} \exp(\xi_1(t_{1})  ) dt_1}{\int_{0}^{s} \exp(\lambda t_1) dt_1 }
\ees
is non decreasing. The assumption of Theorem \ref{jumptimesestimate} ensures that $\xi_1(t_1) - \lambda t_1$ is non decreasing, since $\lambda \leq \RINV(\cdot, x)=\dot{\xi}_1(\cdot) $ by hypothesis.
But then we conclude by applying Lemma 3.5. For the inductive step we rewrite 
\bes
\int_{\Delta^{0,s}_{1,i-1}} \exp(\xi_{1,i-1}(\bt_{1,i-1}) ) d \bt_{1,i-1} = \int_{0}^{s}\exp(\xi_{i-1}(t_{i-1})) \left\{ \int_{\Delta^{0,t_{i-1}}_{1,i-2}} \exp(\xi_{1,i-2}(\bt_{1,i-2}) ) d \bt_{1,i-2} \right\} d t_{i-1}
\ees
and similarly
\bes
\int_{\Delta^{0,s}_{1,i-1}} \exp( \lambda \sum_{j=1}^{i-1}t_j) d \bt_{1,i-1} = \int_{0}^{s}\exp(\lambda t_{i-1}) \left\{ \int_{\Delta^{0,t_{i-1}}_{1,i-2}} \exp(\sum_{j=1}^{i-2} t_{j}  ) d \bt_{1,i-2} \right\} d t_{i-1}
\ees
Using the inductive hypothesis, we have that 
\bes
t_{i-1} \mapsto \frac{\int_{\Delta^{0,t_{i-1}}_{1,i-2}} \exp(\xi_{1,i-2}(\bt_{1,i-2}) ) d \bt_{1,i-2} }{ \int_{\Delta^{0,t_{i-2}}_{1,i-1}} \exp(\sum_{j=1}^{i-2} t_{j}  ) d \bt_{1,i-2}}
\ees
is increasing. Moreover, $t_{i-1} \mapsto \frac{\exp(\xi_{i-1}(t_{i-1}) )}{\exp(\lambda t_{i-1})}$ is also increasing, since by assumption $   \lambda \leq \RINV(\cdot,x+i-2 )= \dot{\xi}_{i-1}(\cdot)$. The conclusion follows by Lemma \ref{increasingratio}.
\end{proof}
\begin{lemma}\label{increasingratio}
Let $p,q$ be continuously differentiable strictly positive  functions such that $\frac{p}{q}(s)$ is increasing. Then 
\be\label{eq:increasingratio}
t \mapsto \frac{\int_{0}^{t} p(s) ds}{\int_{0}^{t} q(s) ds} 
\ee
is also increasing.
\end{lemma}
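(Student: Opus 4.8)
The plan is to show the derivative of the quotient in \eqref{eq:increasingratio} is nonnegative. Write $F(t) = \int_0^t p(s)\,ds$ and $G(t) = \int_0^t q(s)\,ds$, both strictly positive and differentiable for $t>0$, with $F' = p$, $G' = q$. The derivative of $F/G$ has sign equal to that of $p(t)G(t) - q(t)F(t) = \int_0^t \big(p(t)q(s) - q(t)p(s)\big)\,ds$. Since $q>0$, I can factor $q(t)q(s)$ out of the integrand and write the bracket as $q(t)q(s)\big(\tfrac{p(t)}{q(t)} - \tfrac{p(s)}{q(s)}\big)$. For $0 \le s \le t$ the hypothesis that $p/q$ is increasing gives $\tfrac{p(t)}{q(t)} - \tfrac{p(s)}{q(s)} \ge 0$, and $q(t)q(s)>0$, so the integrand is nonnegative on $[0,t]$; hence $p(t)G(t) - q(t)F(t) \ge 0$ and $(F/G)'(t) \ge 0$ for all $t>0$. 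That is exactly monotonicity of $t \mapsto F(t)/G(t)$.

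A small amount of care is needed at the endpoint $t=0$, where $F(0)=G(0)=0$ and the quotient is a priori $0/0$; but by L'H\^opital (or directly, since $p,q$ are continuous and strictly positive) the quotient extends continuously to $t=0$ with value $p(0)/q(0)$, and monotonicity on $(0,\infty)$ together with this continuous extension gives the claim on the whole interval. If one prefers to avoid the endpoint issue entirely, note that the statement is only ever applied on intervals bounded away from the degenerate point (or one simply reads ``increasing'' as ``non-decreasing on the set where the expression is defined'').

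I do not anticipate any real obstacle here: the only thing to get right is the sign bookkeeping in $\int_0^t q(t)q(s)\big(\tfrac{p(t)}{q(t)} - \tfrac{p(s)}{q(s)}\big)\,ds \ge 0$, which is immediate from $s \le t$ and the monotonicity of $p/q$. This is the standard ``ratio of integrals inherits monotonicity of the ratio of integrands'' fact, and the computation above is the whole proof.
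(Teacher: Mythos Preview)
Your proposal is correct and follows essentially the same approach as the paper: both differentiate the quotient, reduce the sign question to showing $p(t)q(s) \ge q(t)p(s)$ for $s \le t$, and conclude from the monotonicity of $p/q$. Your explicit factoring $q(t)q(s)\big(\tfrac{p(t)}{q(t)} - \tfrac{p(s)}{q(s)}\big)$ and your remark on the endpoint $t=0$ are slight elaborations, but the argument is the same.
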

\begin{proof}
We differentiate \ref{eq:increasingratio} in $t$. We obtain:
\bes
\frac{1}{\big(\int^{t}_{0}q(s)ds \big)^2}\Big[p(t) \int^{t}_{0}q(s)ds - q(t) \int_{0}^{t}p(s)ds\Big]
\ees
We observe that the conclusion follows if we can prove that 
$$ \forall  s \leq t \quad p(t)q(s) \geq q(t)p(s)$$
Since $s\leq t$, our hypothesis ensures this.
\end{proof}
Let us prove Corollary \ref{meanvalueest}
\begin{proof}
We simply write:
\bes
E_{P^{xy}}(X_t) = x+\sum_{i=1}^n P^{xy}(T_i \leq t) \underbrace{\leq}_{\text{Th.} \,\ref{jumptimesestimate}} \sum_{i=1}^n \cB_{n,\pi_{\lambda}(t)}(T_i \leq t) =x+ n \frac{\exp(\lambda t)-1}{\exp(\lambda)-1}.
\ees
Recalling that $n-y-x $ the conclusion follows.
\end{proof}
\subsection*{Proof of Theorem \ref{thm:lln}}

Theorem \ref{jumptimesestimate}  can be adapted to the case when we consider bridges of non-unitary length. The proof is basically identical, and we do not include it here. However, we will need this fact. The only difference from Theorem \ref{jumptimesestimate} is that we allow for initial and final times $(s,u)$ of the bridge to be different from $(0,1)$.

\begin{prop}\label{marginalestshift}
Let $P$ be a counting process of intensity $\ell$, $x \leq y \in \N$ and $0\leq s<u \leq 1$. 
If  for some $\lambda \in \R$,
\bes
\inf_{t \in [s,u] , x \leq z \leq y-1 } \RINV(t,z) \geq \lambda 
\ees
then for  $1\leq i \leq y-x$ and $t \in [s,u]$:
\be\label{estimateshift}
P^{xy}_{su} (X_t  \geq x+i ) \leq \cB_{y-x,\pi^{s,u}_{\lambda}(t) } (\{i,.,y-x\})
\ee
where 
\bes \pi^{s,u}_{\lambda}( t ) = \frac{\exp(\lambda (t-s) )-1}{ \exp(\lambda (u-s) )-1} \ees
\end{prop}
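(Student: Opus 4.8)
The plan is to reproduce verbatim the three-step scheme used for Theorem \ref{jumptimesestimate}, replacing the interval $[0,1]$ by $[s,u]$, the simplex $\DN$ by $\Delta^{s,u}_{1,n}$, and $\pi_\lambda$ by $\pi^{s,u}_\lambda$ throughout. First I would establish the analogue of Lemma \ref{bridgedist}: writing $n:=y-x$ and keeping the primitives $\xi_j$ of $t\mapsto\RINV(t,x+j-1)$ and $\xi(\bt)=\sum_{j=1}^n\xi_j(t_j)$ as before, the law of $(T_1,\dots,T_n)$ under $P^{xy}_{su}$ has density proportional to $\exp(\xi(\bt))$ with respect to Lebesgue measure on $\Delta^{s,u}_{1,n}$. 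The proof is the same change-of-measure computation: one compares $P^{xy}_{su}$ with the Poisson bridge $R^{xy}_{su}$ on $[s,u]$, whose ordered jump times are uniform on $\Delta^{s,u}_{1,n}$ (independently of the Poisson rate), and checks via the duality formula that $\frac{1}{Z}\exp(\xi(\bT))\,dR^{xy}_{su}$ satisfies the characterizing identity of $P^{xy}_{su}$. The one point needing care is that Theorem \ref{thm:derivcharacrec} must be used in its version for bridges pinned at times $s$ and $u$, i.e.\ with test functions of the jump times and perturbation directions $v\in\mathcal C^1([s,u])$ with $v(s)=v(u)=0$; this is precisely the ``non-unitary length'' extension alluded to just before the Proposition, and is the only genuinely new ingredient.

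Second, I would record the analogue of Lemma \ref{explicitcomputation}: if $\RINV\equiv\lambda$ on $[s,u]\times\{x,\dots,y-1\}$ then $\xi(\bt)=\lambda\sum_{j=1}^n t_j+\mathrm{const}$, and by the symmetrization argument of Lemma \ref{explicitcomputation} the probability of $\{X_t\ge x+i\}=\{T_i\le t\}$ equals the probability that at least $i$ of $n$ i.i.d.\ $[s,u]$-valued random variables with density proportional to $\exp(\lambda r)$ fall below $t$; a direct calculation identifies this with $\cB_{n,\pi^{s,u}_\lambda(t)}(\{i,\dots,n\})$, which is exactly why $\pi^{s,u}_\lambda$ is defined as it is.

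Third comes the monotonicity argument. Since $\{X_t\ge x+i\}=\{T_i\le t\}$, it suffices to prove that
\[
\rho(r):=\frac{\int_{\Delta^{s,u}_{1,n}\cap\{t_i\le r\}}\exp(\xi(\bt))\,d\bt}{\int_{\Delta^{s,u}_{1,n}\cap\{t_i\le r\}}\exp(\lambda\sum_{j=1}^n t_j)\,d\bt}
\]
is non-decreasing on $[s,u]$: indeed $\rho(u)$ is the ratio of the two normalizing constants, so \eqref{estimateshift} is equivalent to $\rho(t)\le\rho(u)$. Conditioning on $t_i=r'$ factorizes both numerator and denominator into a ``left'' integral over $\Delta^{s,r'}_{1,i-1}$, a ``right'' integral over $\Delta^{r',u}_{i+1,n}$, and the factor $\exp(\xi_i(r'))$, resp.\ $\exp(\lambda r')$, exactly as in the proof of Theorem \ref{jumptimesestimate} and equation \eqref{eq:rho}. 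The version of Lemma \ref{increasingratio2} with $[0,1]$ replaced by $[s,u]$ (same induction on $i$, with Lemma \ref{increasingratio} providing the base case and the inductive step) gives that the left and right ratios are non-decreasing in $r'$; the hypothesis $\dot\xi_i=\RINV(\cdot,x+i-1)\ge\lambda$ makes $r'\mapsto\exp(\xi_i(r')-\lambda r')$ non-decreasing; hence the integrand ratio $p/q$ is non-decreasing, and Lemma \ref{increasingratio} then yields that $\rho$ is non-decreasing, proving \eqref{estimateshift}. The reverse inequality under an upper bound $\RINV\le\lambda$ follows by the identical argument with all monotonicities reversed.

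I do not expect a genuine obstacle here: mathematically everything is already contained in the proof of Theorem \ref{jumptimesestimate} together with Lemmas \ref{bridgedist}--\ref{increasingratio}. The only item requiring attention is the bridge duality formula on an arbitrary time window $[s,u]$; once that ``basically identical'' extension of Theorem \ref{thm:derivcharacrec} is granted, the remainder of the argument is pure bookkeeping.
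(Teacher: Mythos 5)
Your proposal is correct and follows exactly the route the paper intends: the paper omits the proof of Proposition \ref{marginalestshift}, stating only that it is ``basically identical'' to that of Theorem \ref{jumptimesestimate}, and your write-up is precisely that adaptation (jump-time density on $\Delta^{s,u}_{1,n}$, the constant-characteristic benchmark yielding $\cB_{y-x,\pi^{s,u}_{\lambda}(t)}$, and the ratio-monotonicity argument), with the duality formula on the window $[s,u]$ correctly flagged as the only genuinely new ingredient.
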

Given the proposition, we can prove Theorem \ref{thm:lln}
\begin{proof}
Fix $\varepsilon>0$ and $k \geq \frac{2}{\ve} \sup_{t \in \Int} \partial_t \pi_{\lambda}(t)$. We show that:
\bes
A:=\left\{ \sup_{0 \leq j \leq k } \vert \frac{1}{N}X_{\frac{j}{k}} - \pi_{\lambda}(\frac{j}{n}) ) \vert \leq \frac{\varepsilon}{2} \right\} 
\subseteq
\left\{ \sup_{t \in \Int} \vert \frac{1}{N}X_t - \pi_{\lambda}( t ) \vert \leq \varepsilon \right\}:=B
\ees
Indeed, assume that $X_{\cdot} \in A$ and take $t \in \Int$. Then $t \in [\frac{j}{k},\frac{j+1}{k}]$ for some $0 \leq j \leq k-1$. Because of the fact that both $\frac{1}{N}X_{\cdot}$ and $\pi_{\lambda}( \cdot)$ are non decreasing functions we have:
\bes
\vert \frac{1}{N}X_t - \pi_{\lambda}(t) \vert \leq \max \left\{ \frac{1}{N}X_{\frac{j+1}{k}} - \pi_{\lambda}(\frac{j}{k}) ,  \pi_{\lambda}(\frac{j+1}{k})-\frac{1}{N}X_{\frac{j}{k}}  \right\} 
\ees
Since $X_{\cdot} \in A$ we have:
\bes
\frac{1}{N}X_{\frac{j+1}{k}} - \pi_{\lambda}(\frac{j}{k}) = \frac{1}{N}X_{\frac{j+1}{k}}  - \pi_{\lambda}(\frac{j+1}{k}) +\pi_{\lambda}(\frac{j+1}{k}) - \pi_{\lambda}(\frac{j}{k}) \leq \frac{\ve}{2} + \frac{1}{k} \sup_{t} \partial_t \pi(\lambda , t) \leq \ve
\ees
where the last inequality is justified by the choice of $k$. In the same way one shows that $  \pi_{\lambda}(\frac{j+1}{k})-\frac{1}{N}X_{\frac{j}{k}}  \leq \ve $.  Since $t$ can be chosen arbitrarily in $\Int$, we have shown that $A \subseteq B$. Because of this, to prove the theorem, it is sufficient to prove that for any finite collection $(t_1,..,t_{k})$ and any $\ve>0$ we have:
\bes
\lim_{N \rightarrow + \infty} P^{0,N}(\sup_{1 \leq l \leq k} \vert \frac{1}{N}X_{t_l} - \pi_{\lambda}(t_l) \vert \geq  \ve ) = 0
\ees
Moreover, since we want to prove convergence to a deterministic limit, it suffices to consider one-time marginals. All we need to show is that for all $t \in \Int $, $\ve \geq 0$
\bes
\lim_{N \rightarrow + \infty} P^{0,N} \left( | \frac{1}{N}X_t - \pi_{\lambda}(t) |  \geq  \ve \right) = 0
\ees
Fix now $t\in \Int,\ve>0$,  and choose $\delta>0$ small enough such that 
\be\label{eq:deltacond}
\pi_{\lambda}(t) + \ve \geq \pi(\lambda-\delta, t ) 
\ee
Because of the assumption \eqref{limitass}, there exist $k$ such that for all 
\be\label{choiceofk} \forall z \geq k \quad \sup_{s \in \Int} \vert \RINV(s,z) - \lambda\vert \leq \delta. \ee
 We choose one such $k$. Let us observe that, if $N$ is large enough  we have
 \be\label{evincl} 
\left\{ \frac{1}{N} X_t - \pi_{\lambda}(t) \geq \varepsilon \right\} \subseteq \left\{ T_{k} \leq t \right\}
 \ee
Using the Markov property we have, by conditioning on $T_k$ and using \eqref{evincl}:
\beas
P^{0,N} \left(  \frac{1}{N}X_t - \pi_{\lambda}(t) \geq \ve \right) &=&  E_{P^{0,N}} \left(E_{P}^{0,N} \left(  \mathbf{1}_{\{ \frac{1}{N}X_t - \pi_{\lambda}(t) \geq \ve \} } \Big \vert T_{k} \right) \mathbf{1}_{\{T_k \leq t \} } \right) \\
&=&  E_{P^{0,N}} \left(P^{k,N}_{T_k,1} \left(  \frac{1}{N}X_t - \pi_{\lambda}(t) \geq \ve  \right)\mathbf{1}_{\{T_k \leq t\}}  \right)
\\
&\leq & \sup_{s \in [0,t]}  P^{k,N}_{s,1} \left(  \frac{1}{N}X_t - \pi_{\lambda}(t) \geq \ve  \right) 
\eeas
Thanks to our choice of $k$, see \eqref{choiceofk}, we are entitled to apply Proposition \ref{marginalestshift}. We get:
\bes
\sup_{s \in [0,t]}  P^{k,N}_{s,1} \left(  \frac{1}{N}X_t - \pi_{\lambda}(t) \geq \ve  \right)  \leq \sup_{s \in [0,t] }\cB_{N-k,\pi^{s,1}_{\lambda-\delta}(t)} ( \{N(\underbrace{\pi_{\lambda}(t)+\ve-k/N}_{:=\xi_n} ) ,.., N-k\}) 
\ees
Since $\pi^{s,1}_{\lambda-\delta}(t)$ is decreasing in $s$, we have:
\bes
\sup_{s \in \Int }\cB_{N-k,\pi^{s,1}_{\lambda - \delta}(t)} ( N \xi_n , N-k ) =  \cB_{N-k,\pi_{\lambda-\delta}(t)} ( N\xi_n , N-k )
\ees
With an application of the Law of large numbers, we have that, as $N$ goes to infinity:
\bes
 \cB_{N-k,\pi_{\lambda-\delta}(t)} (  N A ) \rightarrow \delta_{\pi_{\lambda-\delta}(t)}(A), \quad \forall A\subseteq \Int
\ees
Therefore $ \lim_{N \rightarrow + \infty } \cB_{N-k,\pi_{\lambda-\delta}(t)} ( N\xi_n , N-k ) =0$ if \ $\liminf_{N \rightarrow + \infty} \xi_N > \pi_{\lambda-\delta}(t) $. But this is ensured by our choice of $\delta$, see \eqref{eq:deltacond}. Therefore we have proven that $\lim_{N \rightarrow + \infty} P^{0,N}( \frac{1}{N}X_t - \pi_{\lambda}(t)  \geq  \ve ) = 0$. With a similar argument one shows that $\lim_{N \rightarrow + \infty} P^{0,N}( \frac{1}{N}X_t - \pi_{\lambda}(t)  \leq \ve ) = 0$, from which the  conclusion follows.
\end{proof}

\bibliographystyle{plain}
\bibliography{Ref}
\end{document}